\documentclass{amsart}

\usepackage{tikz}
\usepackage{pgf}
\usetikzlibrary{arrows,shapes,snakes,automata,backgrounds,petri}
\usepackage{amsmath}
\usepackage{amssymb}
\usepackage{amsthm}
\usepackage[all]{xy}
\usepackage[latin1]{inputenc}        
\usepackage{amscd}
\usepackage{mathrsfs}
\usepackage[mathcal]{eucal}
\usepackage{float}
\author{Cinzia Bisi, Giampiero Chiaselotti}
\address{Chiaselotti Giampiero\\Dipartimento di Matematica, Universit\'a della Calabria, Via Pietro Bucci,
Cubo 30B, 87036 Arcavacata di Rende (CS), Italy.}
\email{chiaselotti@unical.it}
\address{Bisi Cinzia\\Dipartimento di Matematica, Universit\'a di Ferrara, Via Machiavelli 35, 44100, Ferrara, Italy}
\email{bsicnz@unife.it}
\email{bisi@math.unifi.it}
\title[A class of lattices]{A class of lattices and boolean functions related to a Manickam-Mikl\"os-Singhi Conjecture}
\date{\today}

\newtheorem{inizio}{Lemma}[section]

\newtheorem{proposition}[inizio]{Proposition}
\newtheorem{remark}[inizio]{Remark}

\newtheorem{definition}[inizio]{Definition}
\newtheorem{question}[inizio]{Question}
\newtheorem{o-problem}[inizio]{Open Problem}

\newtheorem*{teo-L}{Theorem}

\newtheorem*{corollary-s}{Corollary}
\theoremstyle{definition}
\newtheorem{example}[inizio]{Example}


\setcounter{section}{-1}

\begin{document}

\subjclass{Primary: 05D05}
\keywords{Graded lattices, involution posets, weight functions, boolean maps, extremal sum problems.}
\thanks{The first author is partially supported by Progetto MIUR di
Rilevante Interesse Nazionale {\it Propriet\`a geometriche
delle variet\`a reali e complesse} and by GNSAGA - INDAM}

\abstract
The aim of this paper is to build a new family of lattices related to some combinatorial extremal sum problems, in particular to a conjecture of Manickam, Mikl\"os and Singhi. We study the fundamentals properties of such lattices and of a particular class of boolean functions defined on them. 

\endabstract

\maketitle

\section{Introduction}
Let $n,r$ be two fixed integers such that $0 \le r \le n$ and let $I_n =\{ 1,2 \cdots , n \}$. In the first part of this paper (Section \ref{Sec1}) we define a partial order $\sqsubseteq$ on the power set $\mathcal{P}(I_n)$ having the following property :
if $X, Y$ are two subsets of $I_n$ such that $X \sqsubseteq Y$, then $\sum_{i\in X}a_i \le \sum_{i\in Y}a_i$, for each $n$-multiset $\{ a_1,\cdots ,a_n\}$ of real numbers such that $a_1 \ge \cdots \ge a_r \ge 0 > a_{r+1} \ge \cdots \ge a_n$. This order defines a lattice structure on $\mathcal{P}(I_n)$ that we will denote by $(S(n,r),\sqsubseteq)$. We show as this lattice is distributive, graded (Section \ref{Sec2}), involutive (Section \ref{Sec3}), i.e. $X \sqsubseteq Y$ implies $Y^c \sqsubseteq X^c$, and we also give an algorithmic method to generate uniquely its Hasse diagram (Section \ref{Sec4}) and a recursive formula to count the number of its elements having fixed rank (Section \ref{Sec5}).

In the second part of the paper (Section \ref{Sec6}) we establish the connection between the lattice $S(n,r)$ and some combinatorial extremal sum problems related to a conjecture of Manickam, Mikl\"os and Singhi. We give an interpretation of these problems in terms of a particular class of boolean maps defined on $S(n,r)$ (Section \ref{Sec7}). 

Now we briefly summarize the historical motivations that have led us to build the lattice $S(n,r)$ and the other associated structures.

In \cite{ManMik87} the authors asked the following question:
{\it let $n$ be an integer strictly greater than 1 and $a_1, \cdots a_n$ be real numbers satisfying the property $\sum_{1=1}^n a_i \ge 0.$ We may ask: how many of the subsets of the set $\{ a_1, \cdots, a_n \}$ will have a non-negative sum?} \\
Following the notations of \cite{ManMik87}, the authors denote with $A(n)$
the minimum number of the non-negative partial sums of a sum $\sum_{i=1}^n a_i \ge 0,$ not counting the empty sum, if we take all the possible choices of the $a_i$'s. They prove (see Theorem 1 in \cite{ManMik87}) that $A(n)=2^{n-1}$ and they explain as Erd\"os, Ko and Rado investigated a question with an answer similar to this one:
{\it what is the maximum number of pairwise intersecting subsets of a $n-$elements set?} As in their case, here also the question becomes more difficult if we restrict ourselves to the $d$-subsets. More details about this remark can be find in the famous theorem of Erd\"os-Ko-Rado \cite{erd-ko-rad} (see also \cite{katona} for an easy proof of it). \\
Formally, with the introduction of the positive integer $d,$ the problem is the following. Let $1 \le d <n$ be an integer;
a function $f:I_n \to \mathbb{R}$ is called a $n-$weight function if $\sum_{x\in I_n} f(x) \ge 0.$ \\
Denote with $W_n(\mathbb{R})$ the set of all the $n-$weight functions and if $f\in W_n(\mathbb{R})$ we set
$$f^+=|\{x \in I_n: f(x) \ge 0 \}|,$$
$$\alpha (f) = |\{ Y \subseteq I_n : \sum_{y \in Y} f(y) \ge 0 \}|,$$
$$\phi (f,d) = |\{ Y \subseteq I_n : |Y|=d, \sum_{y \in Y} f(y) \ge 0 \}|,$$
 and furthermore \\
$$
\psi (n,d)=\min \{ \phi (f,d): f \in W_n(\mathbb{R}) \}.
$$
If $f$ is such that $f(1)=n-1,$ $f(2)=\cdots =f(n)=-1,$ it follows that $\psi (n,d) \le \binom{n-1}{d-1}.$ \\
In \cite{bier-man}, Bier and Manickam proved that $\psi (n,d) =\binom{n-1}{d-1}$ if $n \ge d(d-1)^d (d-2)^d +d^4$ and $\psi(n,d)=\binom{n-1}{d-1}$ if $d|n.$ 
Both the proofs use the Baranyai theorem on the factorization of complete hypergraphs \cite{baranai} (see also \cite{wilson} for a modern exposition of the theorem).
\\In \cite{ManMik87} and \cite{ManSin88} it was conjectured that $\psi (n,d) \ge \binom{n-1}{d-1}$ if $n \ge 4d.$
In \cite{ManSin88} this conjecture has been set in the more general context of the association schemes (see \cite{bailey-2004} for general references on the subject). In the sequel we will refer to this conjecture as the Manickam-Mikl\"os-Singhi (MMS) Conjecture.
This conjecture is connected with the first distribution invariant of the Johnson association scheme (see \cite{bier-man}, \cite{ManSin88}, \cite{manic-88}, \cite{manic-91}). The distribution invariants were introduced by Bier \cite{bier}, and later investigated in \cite{bier-delsarte}, \cite{manic-86}, \cite{manic-88}, \cite{ManSin88}. In \cite{ManSin88} the authors claim that this conjecture is, in some sense, dual to the theorem of Erd\"os-Ko-Rado \cite{erd-ko-rad}. Moreover, as pointed out in \cite{sriniv-98}, this conjecture settles some cases of another conjecture on multiplicative functions by Alladi, Erd\"os and Vaaler, \cite{erdos}. Partial results related to the Manickam-Mikl\"os-Singhi conjecture have been obtained also in \cite{bhatta-2003}, \cite{bhatt-thesis}, \cite{Chias02}, \cite{ChiasInfMar08}, \cite{chias-mar-2002}.\\
Now, if $1\le r \le n,$ we set:

\begin{equation} \label{eq0}
\gamma (n,r)=\min \{ \alpha (f) : f \in W_n(\mathbb{R}), f^+=r \},
\end{equation}
\begin{equation} \label{eq1}
\gamma (n,d,r)=\min \{ \phi (f,d) : f \in W_n(\mathbb{R}), f^+=r \}.
\end{equation}

The numbers $\gamma(n,d,r)$ have been introduced in \cite{Chias02} and they also have been studied in \cite{ChiasInfMar08},
in order to solve the Manickam-Mikl\"os-Singhi conjecture, because it is obviuos that:
\begin{equation} \label{eq2}
\psi (n,d)= \min \{ \gamma (n,d,r) : 1 \le r \le n \}.
\end{equation}

Therefore the complete computation of these numbers gives an answer to the MMS conjecture but this is not the purpose of this paper.

In \cite{ManMik87} it has been proved that $\gamma (n,r) \ge 2^{n-1}$ for each $r,$ and that $\gamma (n,1)=2^{n-1}$. 
\begin{question}
Is it true that $\gamma (n,r) = 2^{n-1}$ for each $r$?
\end{question}
This is true if, for each $r$ such that $1 \le r \le n,$ we can find a function $f \in W_n(\mathbb{R})$ with $\alpha (f)=2^{n-1}$.

Let us observe now  that when we have a $n$-weight function $f$, the standard ways to produce $n$-subsets on which $f$ takes non-negative values are the following :

(i) if we know that $X$ and $Y$ are two subsets of $I_n$ such that $\sum_{x \in X}f(x) \ge 0$ and $\sum_{x\in X}f(x) \le \sum_{x\in Y}f(x)$, then also $\sum_{x\in Y}f(x) \ge 0$ ({\it monotone} property);

(ii) if we know that $\sum_{x\in X}f(x) < 0$, then $\sum_{x\in X^c}f(x) \ge 0$ ({\it complementary} property).

Then we ask:

A) Is it possible to axiomatize the  properties (i) and (ii) in some type of abstract structure in such a way that the sum extremal problems upon described become particular extremal problems of more general problems?

B) In such abstract structure can we find unexpected links with other theories which help us to solve these sum extremal problems?

C) Is it possible to define an algorithmic strategy in such abstract structure to approach these sum extremal problems in a deterministic way?


In this paper we show that the answer to all the previous questions is affirmative.

We define a partial order $\sqsubseteq$ on the subsets of $I_n$ such that if $X$ and $Y$ are two subsets with $X \sqsubseteq Y$, then $\sum_{x \in X} f(x) \le \sum_{x \in Y} f(x)$, for each $n$-weight function $f$. In the first part of this paper, we study the fundamental properties of this order (see Section \ref{Sec1}, \ref{Sec2}, \ref{Sec3}).

The attempt of computing the numbers in (\ref{eq0}) and (\ref{eq1}) for each $n,d,r$ gives us the idea to construct two types of lattices, denoted by  $S(n,r)$ and $S(n,d,r)$, and to transform the problem of computing the numbers $\gamma(n,r)$ and $\gamma (n,d,r)$ into the problem of computing a minimal cardinality on a family of posets. \\
This way of consider the problem has many advantages. For example, when we try to prove that $\gamma(n,r)$ is not greater than $2^{n-1}$ for each $r$, we need to build a particular $n$-weight function $f$ with $f^+ = r$ such that $\alpha(f) \le 2^{n-1}$. In general, this leads to examine a certain number of inequalities, and if such number is big the determination of $f$ can be difficult. The case of $\gamma(n,d,r)$ is similar and, obviously, more difficult. In general, if our aim is to prove that $\gamma(n,r) \le T$ (or $\gamma(n,d,r) \le T$), for some number $T$, it is natural to ask : is it possible to determine a minimal number of inequalities which leads us to find a $n$-weight function $f$ with $f^+ = r$, such that $\alpha(f) \le T$ (or $\phi(f,d) \le T$)? If we identify (in some sense) each $n$-weight function with a particular type of boolean map defined on the lattice of the subsets of $I_n$, with the order $\sqsubseteq$, the number of these maps will be finite, and even if such number is large, the study of the properties of the lattice could lead to examine a more restricted class of these maps, that lends itself to a simpler study.


To better understand what we have just asserted, let us consider an example.
\begin{example}
Let $n=8$ and $r=d=5$. Let $f$ be the following $8$-weight function with $f^+ = 5$ :

\begin{equation}\label{forma-f}
   \begin{array}{ccccccccc}
    \tilde{5} & \tilde{4} & \tilde{3} & \tilde{2} & \tilde{1} & \overline{1} & \overline{2} & \overline{3} &  \\
    \downarrow & \downarrow & \downarrow & \downarrow & \downarrow & \downarrow & \downarrow & \downarrow & \\
    \frac{1}{5} & \frac{1}{5} &  \frac{1}{5} & \frac{1}{5} & \frac{1}{5} & -\frac{1}{3} & -\frac{1}{3} & -\frac{1}{3} &\\
  \end{array}
\end{equation}

(Upon we have written $I_8$ in the form $\{\tilde{5}, \tilde{4}, \tilde{3}, \tilde{2}, \tilde{1}, \overline{1},{\,} \overline{2}, \overline{3} \}$, and in the following we write, for example, the $5$-tuple $\tilde{5}\tilde{3}\tilde{1}\overline{2} \,\, \overline{3}$ as $531|23$).
It follows easily that $\phi(f,5)=\binom{5}{5} + 3\binom{5}{4}=16$. Therefore, by (\ref{eq1}) we have $\gamma(8,5,5)\le 16$.
To prove that also the inverse inequality holds, we fix an arbitrary $8$-weight function $f$ with $f^+ = 5$ and we prove that it has always at least 16 $5$-tuples on which it takes a non-negative value. Then, if we consider the $5$-tuple $4321|3$, it is easy to see that its non-negativity implies also the non-negativity of $16$ other $5$-tuples (included itself). This means that in the sublattice $S(8,5,5)$ of the lattice $S(8,5)$ the element $4321|3$ spans an up-set having $16$ elements. Then we say that the element $4321|3$ has {\it positive weight} $16$. Therefore we can assume that $f$ has negative sum on $4321|3$. Since $f$ is a weight-function, it takes then non-negative sum on the complementary $3$-tuple $5|12$. It is easily seen then that the non-negativity of $5|12$ produces exactly the non-negativity of $15$ other $5$-tuples. Let us consider now the $5$-tuple $4321|1$ (which is not included in the non-negative $5$-tuples above described). If $f$ takes non-negative sum on $4321|1$, we have produced exactly $16$ other $5$-tuples with non-negative sum for $f$. If $f$ takes negative sum on $4321|1$, then it must take non-negative sum on the complementary $3$-tuple $5|23$, and this produces $16$ other $5$-tuples having non-negative sum and different from the previous $15$ $5$-tuples; therefore we obtain in this case $(15+16=31)$ $5$-tuples having non-negative sum. This shows that $\gamma(8,5,5)=16$.
\end{example}

In the previous computation of the number $\gamma(8,5,5)$ we can note as the only properties that we have used are the monotone and the complementary properties. Then, to define an algorithmic procedure which holds for each value of $n$, $d$ and $r$, we need an order structure which includes all the subsets of $I_n$, and not only those with $d$ elements, since their set is not closed with respect to complementary operation.

In this paper we concentrate our attention on the numbers $\gamma(n,r)$ and we will approach the study of the $\gamma(n,d,r)$'s in subsequent papers. Here we build a formal context which makes sense out of what is said above and also out of the question raised in \cite{ManMik87}: ``What is the structure of the constructions giving this extremal value?". We show also that the problems above described can be considered as problems related to a particular class of boolean functions defined on our order structures. The properties of these boolean functions generalize the essential properties of the weight-functions, i.e. the order preserving and the complementary property.

In Section \ref{Sec7}, we state two open problems, which are substantially two statements of representation theorems. If the answer to these problems will be affirmative, the problem of determining the numbers $\gamma(n,r)$ and $\gamma(n,d,r)$ will be equivalent to the problem of determining the minimum number of elements which have value $1$ for a particular type of boolean functions. The advantage of this approach consists in the possibility to use the results of the combinatorial lattice theory.

To conclude, we believe that the study of the extremal sum problems settled in \cite{ManMik87} and in \cite{ManSin88} (among which the Manickam-Mikl\"os-Singhi Conjecture), in the setting of the lattices $S(n,r),$ is interesting because it can lead to unexpected links among the combinatorial theory of the lattices, the theory of association schemes (good references for the link between the association schemes and the extremal problem on the non-negative sums of real numbers are \cite{bier-man}, \cite{manic-86}, \cite{manic-88}, \cite{manic-91}, \cite{ManSin88}), the transversal theory (see \cite{ChiasInfMar08}, in which the Hall theorem has been used for computing the $\gamma(n,d,r)$'s with $n=2d+2$ and $n-r=3$) and the theory of boolean functions defined on particular classes of posets.
\\ For example this lattice structure $S(n,r)$ could be useful in the computation of the higher order distribution invariants of the Johnson association scheme, \cite{ManSin88}, \cite{bier-man}: this will be the main object of a forthcoming investigation.

In this paper we adopt the  classical terminology and notations usually used in the context of the partially ordered sets  (see \cite{dav-pri-2002} and \cite{stanley-vol1} for the general aspects on this subject). In particular, if $(P, \le )$ is a poset and $Q \subseteq P$, we set $\downarrow Q = \{y\in P\,\,\, |\,\,\, (\exists \,\,\, x\in Q)\,\,\, y \le x\}$, $\uparrow Q = \{y\in P\,\,\, |\,\,\, (\exists \,\,\, x\in Q)\,\,\, y \ge x\}$, and  $\downarrow \{x\} = \downarrow x$, $\uparrow \{x\} = \uparrow x$, for each $x\in P$. A subset $Q$ of $P$ is said to be a {\it down-set} (or {\it up-set}) of $P$ if $Q=\downarrow Q$ (or $Q=\uparrow Q$).

\section{The Lattice $S(n,r)$ and its Sublattice $S(n,d,r)$}\label{Slattices} \label{Sec1}
Let $n$ and $r$ be two fixed integers such that $0\le r \le n.$ We denote with $A(n,r)$ an alphabet composed by
the following $(n+1)$ formal symbols: $\tilde{1}, \cdots, \tilde{r},\,\,\,0^§,\,\,\, \overline{1}, \cdots, \overline{n-r}.$ We introduce on $A(n,r)$ the following total order:
\begin{equation}\label{totalorder}
\overline{n-r} \prec \cdots \prec \overline{2} \prec \overline{1} \prec 0^§ \prec \tilde {1} \prec \tilde{2}\prec \cdots \prec \tilde{r},
\end{equation}
where $\overline{n-r}$ is the minimal element and $\tilde{r}$ is the maximal element in this chain.
If $i,j \in A(n,r),$ then we write $i \preceq j$ for $i=j$ or $i \prec j$;  $i \curlywedge j$ for the minimum and $i \curlyvee j$ for the maximum between $i$ and $j$ with respect to $\preceq$; $i \vdash j$ if $j$ covers $i$ with respect to $\preceq$ (i.e. if $i \prec j$ and if there does not exist $l\in A(n,r)$ such that $i\prec l \prec j$); $i \nvdash j$ if $j$ does not cover $i$ with respect to $\preceq$;  $j \succ i$ for $i \prec j$;  $j \succeq i$ for $i \preceq j$.
 We set $(\mathcal{C}(n,r), \sqsubseteq)$ the $n$-fold cartesian product poset $A(n,r)^n$. An arbitrary element of $\mathcal{C}(n,r)$ can be identified with an $n$-string $t_1\cdots t_n$ where $t_i \in A(n,r)$ for all $i=1, \cdots, n.$ Therefore, if $t_1\cdots t_n$ and $s_1\cdots s_n$ are two strings of $\mathcal{C}(n,r)$, we have
 \begin{equation*}
 t_1\cdots t_n \sqsubseteq s_1\cdots s_n \Longleftrightarrow t_1 \preceq s_1, \cdots , t_n \preceq s_n.
\end{equation*}

   We introduce now a particular subset $S(n,r)$ of $\mathcal{C}(n,r)$.

A string of $S(n,r)$ is constructed as follows: it is a formal expression of the following type
\begin{equation} \label{stringa}
i_1 \cdots  i_r \,\,\, | \,\,\,  j_1 \cdots  j_{n-r},
\end{equation}
where $i_1, \cdots, i_r \in \{ \tilde{1}, \cdots, \tilde{r}, 0^§ \},$ $j_1, \cdots, j_{n-r} \in \{ \overline{1}, \cdots, \overline{n-r}, 0^§ \}$ and where the choice of the symbols has to respect the following two rules, see (\ref{prop3}) and (\ref{prop4}):\\
\begin{equation} \label{prop3}
i_1 \succeq \cdots \succeq i_r \succeq 0^§ \succeq j_1 \succeq \cdots \succeq j_{n-r};
\end{equation}
furthermore, if we set
\begin{equation} \label{pindex}
p= \left\{ \begin{array}{ll}
      \textrm{max} \{ l: l \in \{1, \cdots, r \} \,\,\, \textrm{with} \,\,\, i_l \succ 0^§ \} & \textrm{if}\,\, l \,\, \textrm{exists} \\
              0 &  \textrm{otherwise} \\
              \end{array} \right.
\end{equation}
and
\begin{equation} \label{qindex}
q= \left\{ \begin{array}{ll}
      \textrm{min} \{ l: l \in \{1, \cdots, n-r \} \,\,\, \textrm{with} \,\,\, j_l \prec 0^§ \} & \textrm{if}\,\, l \,\, \textrm{exists} \\
              n-r+1 &  \textrm{otherwise} \\
              \end{array} \right.
\end{equation}
then
\begin{equation} \label{prop4}
\begin{array}{ll}
i_1 \succ \cdots \succ i_p \succ 0^§  \,\,\,\, , & i_{p+1}=\cdots =i_r=0^§\\
j_1= \cdots =j_{q-1}=0^§ \,\,\,\, , & 0^§ \succ j_q \succ \cdots \succ j_{n-r}. \\
\end{array}
\end{equation}
If $p=0$ we assume that $i_1= \cdots =i_r=0^§$ and the condition $i_1 \succ \cdots \succ i_p \succ 0^§$ is empty; if $q=(n-r+1),$ we assume that $j_1= \cdots =j_{n-r}=0^§$ and the condition $0^§ \succ j_q \succ \cdots \succ j_{n-r}$ is empty.
The formal symbols which appear in (\ref{stringa}) will be written without  $\;\tilde{}\;$, $\;\bar{}\;$, and $\;^§\;$; the vertical bar $|$ in (\ref{stringa}) will indicate that the symbols on the left of $|$ are in
$\{ \tilde{1}, \cdots, \tilde{r}, 0^§ \}$ and the symbols on the right of $|$ are in $\{ 0^§, \overline{1}, \cdots, \overline{n-r} \}.$
\begin{example}
\begin{itemize}
\item[a)] If $n=3$ and $r=2,$ then $A(3,2)= \{ \tilde{2} \succ \tilde{1} \succ 0^§ \succ \overline{1} \}$ and
$S(3,2)= \{ 21|0, \,\,\, 21|1,\,\,\, 10|0,\,\,\, 20|0,\,\,\, 10|1,\,\,\, 20|1,\,\,\, 00|1,\,\,\, 00|0 \}.$\\
\item[b)] If $n=3$ and $r=0,$ then $A(3,0)=\{ 0^§ \succ \overline{1} \succ \overline{2} \succ \overline{3} \}$
and $S(3,0)=\{ |123,\,\,\,|023,\,\,\,|013,\,\,\,|012,\,\,\,|003,\,\,\,|002,\,\,\,|001,\,\,\,|000 \}.$ \\
\item[c)] If $n=0$ and $r=0,$ then $S(0,0)$ will be identified with a singleton $\Gamma$ corresponding
to $|$ without symbols. \\
\end{itemize}
\end{example}
In the sequel $S(n,r)$ will be considered as sub-poset of $\mathcal{C}(n,r)$ with the induced order from $\sqsubseteq$ after the restriction to $S(n,r).$
Therefore, if $w=i_1 \cdots i_r \,\,\,| \,\,\,j_1 \cdots j_{n-r}$ and $w'=i_1^{'} \cdots i_r^{'} \,\,\,| \,\,\,j_1^{'} \cdots j_{n-r}^{'}$ are two strings in $S(n,r),$ by definition of induced order we have
\begin{equation*}
w \sqsubseteq w' \Longleftrightarrow i_1 \preceq i_1^{'}, \cdots , i_r \preceq i_r^{'},\,\,\, j_1 \preceq j_1^{'}, \cdots ,j_{n-r} \preceq j_{n-r}^{'}.
\end{equation*}
As it is well known, $(\mathcal{C}(n,r), \sqsubseteq)$ is a distributive lattice whose binary operations of {\it inf} and {\it sup} are given respectively by $(t_1\cdots t_n) \wedge (s_1\cdots s_n) = (t_1\curlywedge s_1)\cdots (t_n\curlywedge s_n)$, and $(t_1\cdots t_n) \vee (s_1\cdots s_n) = (t_1\curlyvee s_1)\cdots (t_n\curlyvee s_n)$.

\begin{example}
If $n=7$ and $r=4,$ and if $w_1=4310|023,$ and $w_2=2100|012,$  are two elements of $S(7,4)$, then $w_1 \wedge w_2=2100|023,$ and $w_1 \vee w_2=4310|012.$ \\
\end{example}

In general, if $w_1, w_2 \in S(n,r)$ it is immediate to verify that $w_1\wedge w_2 \in S(n,r)$ and $w_1\vee w_2 \in S(n,r)$. Therefore the following proposition holds:

\begin{proposition} \label{distributS(n,r)}
$(S(n,r), \sqsubseteq)$ is a distributive lattice.
\end{proposition}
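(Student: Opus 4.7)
The plan is to exhibit $(S(n,r),\sqsubseteq)$ as a sublattice of the ambient $(\mathcal{C}(n,r),\sqsubseteq)$. Since $\mathcal{C}(n,r)=A(n,r)^n$ is a product of chains, it is a distributive lattice with componentwise meet and join, and every sublattice of a distributive lattice is itself distributive. Consequently, it suffices to prove that whenever $w,w'\in S(n,r)$, the componentwise meet $w\wedge w'$ and join $w\vee w'$ again satisfy the three defining requirements of $S(n,r)$: the alphabet restriction, the weak descent (\ref{prop3}), and the staircase condition (\ref{prop4}).

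The alphabet restriction is immediate: both $\{\tilde 1,\ldots,\tilde r,0^\circ\}$ and $\{0^\circ,\overline 1,\ldots,\overline{n-r}\}$ are total subchains of $A(n,r)$, hence closed under $\curlywedge$ and $\curlyvee$, so the first $r$ (resp.\ last $n-r$) coordinates of the meet/join remain in the correct alphabet. The weak descent (\ref{prop3}) reduces to the monotonicity of $\curlywedge$ and $\curlyvee$ in each argument, together with the fact that every left coordinate of $w,w'$ is $\succeq 0^\circ$ while every right coordinate is $\preceq 0^\circ$. The real content is the staircase condition. For $w\wedge w'$ on the left half, if $(i_k\curlywedge i_k')$ and $(i_{k+1}\curlywedge i_{k+1}')$ are both $\succ 0^\circ$, then $i_k,i_k',i_{k+1},i_{k+1}'$ all lie in the strict-descent prefix of their respective strings; choosing the argument that realizes the minimum at position $k$, say $i_k\preceq i_k'$, one gets $i_k\succ i_{k+1}\succeq i_{k+1}\curlywedge i_{k+1}'$, so strict decrease is preserved. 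If instead $(i_k\curlywedge i_k')=0^\circ$, then at least one of $i_k,i_k'$ already equals $0^\circ$, so by (\ref{prop4}) for that string every later left coordinate of that string is $0^\circ$, forcing $(i_l\curlywedge i_l')=0^\circ$ for all $l\geq k$. The right half is treated by the dual remark that once some $j_k$ is strictly negative in one string, all $j_l$ with $l\geq k$ are strictly negative and strictly decreasing. The case of $w\vee w'$ is completely dual: strict ascent on the left is supplied by whichever of $i_k,i_k'$ is larger, and on the right the block of leading zeros shrinks to the shorter of the two.

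Once closure of $S(n,r)$ under $\wedge$ and $\vee$ is secured, distributivity is inherited from $\mathcal{C}(n,r)$, which finishes the proof. The only real obstacle is the bookkeeping in the staircase check when the pivot indices $p,p'$ (resp.\ $q,q'$) of the two strings differ, so that one string has already entered its tail or head of zeros while the other has not; the unifying observation that dissolves every such subcase is that a coordinate equal to $0^\circ$ in one string always sits inside a block of zeros, so that a $\curlywedge$ or $\curlyvee$ against $0^\circ$ is governed entirely by the corresponding coordinate of the other string.
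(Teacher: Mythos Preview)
Your proposal is correct and follows exactly the paper's approach: the paper observes that $\mathcal{C}(n,r)$ is distributive and that $S(n,r)$ is closed under the componentwise $\wedge$ and $\vee$, hence is a distributive sublattice. The paper simply declares the closure ``immediate to verify'', whereas you actually carry out the staircase-condition check; your case analysis is sound (in particular, the strict-descent verification on the right half for the meet goes through because whenever $j_{k+1}\curlywedge j_{k+1}'=j_{k+1}\prec 0^\circ$, both $j_k\succ j_{k+1}$ and $j_k'\succeq j_{k+1}'\succeq j_{k+1}$ hold, with at least one of the latter inequalities strict).
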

\begin{proof}

$(\mathcal{C}(n,r), \sqsubseteq)$ is a distributive lattice and $S(n,r)$ is closed with respect to $\wedge$ and $\vee$. Hence $S(n,r)$ is a distributive sublattice of $\mathcal{C}(n,r)$.\end{proof}

\begin{definition}
If $w_1, w_2 \in S(n,r)$, then
\begin{itemize}
\item[i)] $w_1 \sqsubset w_2$ if $w_1 \sqsubseteq w_2$ and $w_1 \neq w_2$;\\
\item[ii)] $w_1\models w_2$ if $w_2$ covers $w_1$ with respect to the order $\sqsubseteq$ in $S(n,r)$ (i.e. if $w_1 \sqsubset w_2$ and there doesn't exist $w\in S(n,r)$ such that $w_1 \sqsubset w \sqsubset w_2$);\\
\item[iii)] $w_1\nvDash w_2$ if $w_2$ does not cover $w_1$ with respect to the order $\sqsubseteq$ in $S(n,r)$.
\end{itemize}
\end{definition}
\begin{remark}
The minimal element of $S(n,r)$ is the string  $0\cdots 0|12 \cdots (n-r)$ and the maximal element is $r(r-1) \cdots 1|0\cdots 0.$ Sometimes they are denoted respectively with $\hat{0}$ and $\hat{1}.$
\end{remark}
If $w$ is a string in $S(n,r)$ in the form (\ref{stringa}) with $p$ and $q$ defined as in (\ref{pindex}) and (\ref{qindex}) (and (\ref{prop3}) and (\ref{prop4}) hold), we set:
\begin{equation*}
w^*=\{ i_1, \cdots, i_p, j_q, \cdots, j_{n-r} \}.
\end{equation*}
For example, if $w=4310|013 \in S(7,4),$ then $w^*=\{ \tilde{1}, \tilde{3}, \tilde{4}, \overline{1}, \overline{3} \}.$ In particular, if $w=0 \cdots 0|0 \cdots 0$ then $w^*=\emptyset.$ \\
It stays defined a bijective map
$$
*: w \in S(n,r) \mapsto w^* \in \mathcal{P} (A(n,r) \setminus \{ 0^§ \} ).
$$
On the contrary, if $B \in \mathcal{P} (A(n,r)\setminus \{ 0^§ \}),$ then $B=B_1 \cup B_2$ (with $B_1 \cap B_2=\emptyset$) where
$B_1=\{ i_1, \cdots i_p \} \subseteq \{ \tilde{1}, \cdots, \tilde{r} \}$ or $B_1=\emptyset$ and $B_2=\{ j_q, \cdots, j_{n-r} \} \subseteq \{\overline{1}, \cdots, \overline{n-r} \}$ or $B_2=\emptyset,$ for some integer $p$ and $q$ such that $1\le p \le r,$
$1 \le q \le n-r,$ with $i_1 \succ \cdots \succ i_p \succ 0^§ \succ j_q \cdots \succ j_{n-r}.$ \\
We will set
$$
\overline{B}= \left\{ \begin{array}{lll}

                       i_1 \cdots i_p 0 \cdots 0|0 \cdots 0 j_q \cdots j_{n-r} & B_1\neq \emptyset & B_2 \neq \emptyset \\
                       i_1 \cdots i_p 0 \cdots 0|0 \cdots 0 0 \cdots 0         & B_1 \neq \emptyset & B_2 =\emptyset \\
                       0 \cdots   0 0 \cdots 0|0 \cdots 0 j_q \cdots j_{n-r}   & B_1=\emptyset & B_2 \neq \emptyset \\
                       0 \cdots   0 0 \cdots 0|0 \cdots 0 0   \cdots 0         & B_1=\emptyset & B_2=\emptyset
                       \end{array}
                       \right.
$$
It stays defined a map:
$$
\bar{}:B\in \mathcal{P}(A(n,r) \setminus \{ 0^§ \}) \mapsto \overline{B} \in S(n,r),
$$
which is the inverse of the previous map $*,$ indeed:
$\overline{w^*}=w$ and $(\overline{B})^*=B,$
for each $B \in \mathcal{P}(A(n,r)\setminus \{ 0^§ \})$ and for each $w\in S(n,r).$  \\
For example, if $B=\{ \tilde{1}, \overline{1} \} \in \mathcal{P}(A(7,5) \setminus \{ 0^§ \})$ then $\overline{B}=10000|01$ is the corresponding string in $S(7,5)$.\\
We define now the following operations on $S(n,r):$ \\
if $w_1,w_2 \in S(n,r),$ we will set \\
\begin{itemize}
\item[i)] $w_1 \sqcup w_2 =\overline{w_1^* \cup w_2^*};$ \\
\item[ii)] $w_1 \sqcap w_2= \overline{w_1^* \cap w_2^*};$ \\
\item[iii)] $w_1^c=\overline{(w_1^*)^{\pi}};$
\end{itemize}
where $(w_1^*)^{\pi}$ means the complement of $w_1^*$ in $A(n,r) \setminus \{ 0^§ \}.$\\
For example, if $w_1=4310|001$ and $w_2=2000|012$ are two strings of $S(7,4),$ then \\
$w_1 \sqcup w_2= \overline{w_1^* \cup w_2^*}=\overline{ \{\tilde{1},\tilde{3},\tilde{4}, \overline{1} \} \cup \{\tilde{2},\overline{1},\overline{2} \} }=\overline{ \{ \tilde{1},\tilde{2},\tilde{3},\tilde{4}, \overline{1}, \overline{2} \} }=4321|012;$ \\
$w_1 \sqcap w_2= \overline{w_1^* \cap w_2^*}=\overline{\{ \tilde{1}, \tilde{3}, \tilde{4}, \overline{1} \} \cap \{\tilde{2}, \overline{1}, \overline{2} \} }=\overline{\{ \overline{1} \}}=0000|001;$ \\
$w_1^c=\overline{(w_1^*)^{\pi}}=\overline{ \{ \tilde{1}, \tilde{3}, \tilde{4}, \overline{1} \}^{\pi}}=\overline{ \{ \tilde{2},\overline{2},\overline{3} \}} =2000|023,$ and $w_2^c=\overline{(w_2^*)^{\pi}}=\overline{ \{\tilde{2}, \overline{1}, \overline{2} \}^{\pi} }=\overline{ \{ \tilde{1},\tilde{3},\tilde{4},\overline{3} \}}=4310|003.$ \\
\begin{remark} \label{InvolutionPropertyS(n,r)}
By previous definitions, it is immediate to verify that $(w^c)^c=w$ for all $w\in S(n,r).$
\end{remark}
Now suppose that $1 \le r \le n$ and that $d$ is a fixed integer such that $1 \le d \le n.$ We denote with $S(n,d,r)$ the set of all the strings of $S(n,r)$ such that in their form $(\ref{stringa})$ contain exactly $d$ symbols of the alphabet $A(n,r)$ different from $0^§$.
\begin{proposition}
$S(n,d,r)$ is a distributive sublattice of $S(n,r).$
\end{proposition}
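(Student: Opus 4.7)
The plan is to verify that $S(n,d,r)$ is closed under the lattice operations $\wedge$ and $\vee$ inherited from $S(n,r)$; the claim then follows from Proposition~\ref{distributS(n,r)} together with the standard fact that a sublattice of a distributive lattice is itself distributive.

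Fix $w_1, w_2 \in S(n,d,r)$, and for $k=1,2$ let $p^{(k)}, q^{(k)}$ be the indices associated to $w_k$ by (\ref{pindex}) and (\ref{qindex}). Set $d^{-}_k := n - r - q^{(k)} + 1$, the number of negative symbols of $w_k$; membership $w_k \in S(n,d,r)$ is precisely the identity $p^{(k)} + d^{-}_k = d$.

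Since $\wedge$ and $\vee$ on $\mathcal{C}(n,r)$ act coordinate-wise, I would count the non-zero coordinates of $w_1 \vee w_2$ separately on each side of the bar. On the left, $0^\star \curlyvee \tilde{\ell} = \tilde{\ell}$, so a left-hand coordinate is non-zero iff it is non-zero in $w_1$ or in $w_2$; because positives are canonically packed into the leftmost slots by (\ref{prop4}), this gives $\max(p^{(1)}, p^{(2)})$ non-zero left entries. Dually, on the right, $0^\star \curlyvee \overline{\ell} = 0^\star$, so a right-hand coordinate is non-zero iff it is non-zero in \emph{both} $w_1$ and $w_2$, and with the negatives packed into the rightmost slots this produces $\min(d^{-}_1, d^{-}_2)$ non-zero right entries. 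Assuming without loss of generality $p^{(1)} \ge p^{(2)}$, the constraints $p^{(1)} + d^{-}_1 = p^{(2)} + d^{-}_2 = d$ force $d^{-}_1 \le d^{-}_2$, hence the total $\max(p^{(1)}, p^{(2)}) + \min(d^{-}_1, d^{-}_2) = p^{(1)} + d^{-}_1 = d$. The symmetric computation for $w_1 \wedge w_2$ swaps $\max$ and $\min$ between the two sides of the bar and produces the same total $d$; membership of the meet and join in the ambient $S(n,r)$ itself is already guaranteed by Proposition~\ref{distributS(n,r)}.

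The one delicate point is the counting step above, but it is not a genuine obstacle: the conceptual content is that one end of each canonical string behaves like a union and the other like an intersection, so that the balance $p^{(k)} + d^{-}_k = d$ automatically pins the combined count to $d$. Once closure under $\wedge$ and $\vee$ has been established, distributivity of $S(n,d,r)$ is immediate from distributivity of $S(n,r)$.
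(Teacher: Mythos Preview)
Your proof is correct and follows essentially the same approach as the paper's: both arguments establish closure under $\wedge$ and $\vee$ by counting the non-zero symbols on each side of the bar, observing that the join takes the maximum count on the left and the minimum count on the right (and dually for the meet), so the constraint $p^{(k)}+d^-_k=d$ forces the total to remain $d$. Your version makes the coordinatewise reasoning behind the $\max/\min$ counts more explicit than the paper does, but the structure and content of the two proofs are the same.
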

\begin{proof}
It is sufficient to prove that, given $w_1,w_2 \in S(n,d,r),$ it holds that $w_1 \wedge w_2 \in S(n,d,r)$ and $w_1 \vee w_2 \in S(n,d,r).$ \\
Then we have that:
$$w_1 = i_1 \cdots i_k 0\cdots 0|0 \cdots 0 i_q \cdots i_{n-r} $$
$$w_2 = i_1^{'} \cdots i_p^{'} 0\cdots 0|0 \cdots 0 i_s^{'} \cdots i_{n-r}^{'}$$
with $i_1, \cdots, i_k,$ $k$ symbols different from $0^§$ and $i_q, \cdots, i_{n-r},$ $(d-k)$ symbols different from $0^§;$ $i_1' \cdots i_p',$ $p$ symbols different from $0^§$ and $i_s', \cdots, i_{n-r}',$ $(d-p)$ symbols different from $0^§$. \\
If $k=p,$ then $(d-k)=(d-p)$ and hence $w_1 \wedge w_2$ and $w_1 \vee w_2$ have exactly $d$ symbols different from $0^§$.\\
If $k>p,$ then $w_1 \vee w_2$ has $k$ symbols different from $0^§$ on the left of $|$ and since $(d-k) < (d-p),$ $w_1 \vee w_2$ has $(d-k)$ symbols different from $0^§$ on the right of $|;$ hence $w_1 \vee w_2$ has exactly $d$ symbols different from $0^§$.\\
On the other hand, $w_1 \wedge w_2$ has $p$ symbols different from $0^§$ on the left of $|,$ and since $(d-k) < (d-p),$ $w_1 \wedge w_2$ has $(d-p)$ symbols different from $0^§$ on the right of $|;$ hence $w_1 \wedge w_2$ has exactly $d$ symbols different from $0^§$.\\
Analogously if $k <p.$
\end{proof}

\begin{remark}
Let us observe that the map $*$ induces a bijection between the power set with $d$ elements of $A(n,r) \setminus \{ 0^§ \},$ denoted with $\mathcal{P}_d (A(n,r)\setminus \{ 0^§ \} )$ and the distributive lattice $S(n,d,r).$
\end{remark}

\section{Fundamental Properties of the Lattice $S(n,r)$} \label{Sec2}
The Hasse diagrams of the lattices $S(n,r)$ for the first values of $n$ and $r$ are the following:

\begin{center}

\begin{tikzpicture}
 [inner sep=0.5mm,
 place/.style={circle,draw=black!100,fill=white!100,thick},scale=0.4]
 \path (0,0)node(0) [place,label=180:{$S(0,0): \quad $},label=0 : {\footnotesize$ $}]{};

 \end{tikzpicture}
 \hfil

 \begin{tikzpicture}
 [inner sep=0.5mm,
 place/.style={circle,draw=black!100,fill=white!100,thick},scale=0.4]
 \path (0,0)node(A) [place,label=0:{\footnotesize $0|$}]{}
 (0,2)node (B)[place,label=180:{$S(1,1): \quad $},label=0 : {\footnotesize$1|$}]{};

 \draw (A)--(B);

 \end{tikzpicture}
 \hfil
\begin{tikzpicture}
 [inner sep=0.5mm,
 place/.style={circle,draw=black!100,fill=white!100,thick},scale=0.4]
 \path (0,0)node(A) [place,label=0:{\footnotesize$|0$}]{}
 (0,2)node (B)[place,label=180:{$S(1,0): \quad $},label=0:{\footnotesize$|1$}]{};
 \draw (A)--(B);
 \end{tikzpicture}
 \\
 \vspace{0.5cm}
  \begin{tikzpicture}
 [inner sep=0.5mm,
 place/.style={circle,draw=black!100,fill=white!100,thick},scale=0.4]
 \path (0,0)node(A) [place,label=0:{\footnotesize$00|$}]{}
 (0,2)node(C) [place,label=0:{\footnotesize$10|$}]{}
 (0,4)node(D) [place,label=0:{\footnotesize$20|$}]{}

 (0,6)node (B)[place,label=180:{$S(2,2): \quad $},label=0:{\footnotesize$21|$}]{};

 \draw (A)--(C)--(D)--(B);

 \end{tikzpicture}
  \hspace{1cm}
  \hfil
 \begin{tikzpicture}
 [inner sep=0.5mm,
 place/.style={circle,draw=black!100,fill=white!100,thick},scale=0.4]
 \path (0,0)node(A) [place,label=270:{\footnotesize$0|1$}]{}
 (2,2)node(C) [place,label=0:{\footnotesize$1|1$}]{}
 (-2,2)node(D) [place,label=180:{\footnotesize$0|0$}]{}

 (0,4)node (B)[place,label=180:{$S(2,1): \qquad \qquad $},label=90:{\footnotesize$1|0$}]{};

 \draw (A)--(C);
 \draw (A)--(D);
 \draw (B)--(C);
 \draw (B)--(D);

 \end{tikzpicture}
 \hfil
  \hspace{1cm}
 \begin{tikzpicture}
 [inner sep=0.5mm,
 place/.style={circle,draw=black!100,fill=white!100,thick},scale=0.4]
 \path (0,0)node(A) [place,label=0:{\footnotesize$|12$}]{}
 (0,2)node(C) [place,label=0:{\footnotesize$|02$}]{}
 (0,4)node(D) [place,label=0:{\footnotesize$|01$}]{}

 (0,6)node (B)[place,label=180:{$S(2,0): \quad $},label=0:{\footnotesize$|00$}]{};

 \draw (A)--(C)--(D)--(B);

 \end{tikzpicture}
 \\
 \vspace{1cm}
  \begin{tikzpicture}
 [inner sep=0.5mm,
 place/.style={circle,draw=black!100,fill=white!100,thick},scale=0.4]
 \path (0,0)node(A) [place,label=0:{\footnotesize$000|$}]{}
 (0,2)node(C) [place,label=0:{\footnotesize$100|$}]{}
 (0,4)node(D) [place,label=0:{\footnotesize$200|$}]{}
 (2,6)node(E) [place,label=0:{\footnotesize$210|$}]{}
 (0,8)node(F) [place,label=0:{\footnotesize$310|$}]{}
 (0,10)node(G) [place,label=0:{\footnotesize$320|$}]{}
 (-2,6)node(H) [place,label=180:{\footnotesize$300|$}]{}
 (0,12)node (B)[place,label=180:{$S(3,3): \quad $},label=0:{\footnotesize$321|$}]{};

 \draw (A)--(C)--(D)--(E)--(F)--(G)--(B);
 \draw (D)--(H)--(F);

 \end{tikzpicture}
 \hspace{2cm}
 \hfil
  \begin{tikzpicture}
 [inner sep=0.5mm,
 place/.style={circle,draw=black!100,fill=white!100,thick},scale=0.4]
\path (0,0)node(A) [place,label=270:{\footnotesize$00|1$}]{}
 (0,2)node(B) [place,label=180:{\footnotesize$00|0$}]{}
 (2,1)node(D) [place,label=270:{\footnotesize$10|1$}]{}
 (4,2)node(E) [place,label=270:{\footnotesize$20|1$}]{}
 (4,4)node(F) [place,label=90:{\footnotesize$20|0$}]{}
 (6,3)node(G) [place,label=270:{\footnotesize$21|1$}]{}
 (6,5)node(H) [place,label=90:{\footnotesize$21|0$}]{}
 (2,3)node (C)[place,label=135:{$S(3,2): \qquad \qquad $},label=90:{\footnotesize$10|0$}]{};

 \draw (A)--(B)--(C)--(F)--(H);
 \draw (A)--(D)--(E)--(G)--(H);
 \draw (D)--(C);
 \draw (E)--(F);
 \end{tikzpicture}
 \\
 \vspace{1cm}
 \begin{tikzpicture}
 [inner sep=0.5mm,
 place/.style={circle,draw=black!100,fill=white!100,thick},scale=0.4]
\path (0,0)node(A) [place,label=270:{\footnotesize$0|12$}]{}
 (0,2)node(B) [place,label=180:{\footnotesize$1|12$}]{}
 (2,1)node(D) [place,label=270:{\footnotesize$0|02$}]{}
 (4,2)node(E) [place,label=270:{\footnotesize$0|01$}]{}
 (4,4)node(F) [place,label=90:{\footnotesize$1|01$}]{}
 (6,3)node(G) [place,label=270:{\footnotesize$0|00$}]{}
 (6,5)node(H) [place,label=90:{\footnotesize$1|00$}]{}
 (2,3)node (C)[place,label=135:{$S(3,1):  \qquad \qquad $},label=90:{\footnotesize$1|02$}]{};

 \draw (A)--(B)--(C)--(F)--(H);
 \draw (A)--(D)--(E)--(G)--(H);
 \draw (D)--(C);
 \draw (E)--(F);
 \end{tikzpicture}
 \hfil
 \hspace{2cm}
  \begin{tikzpicture}
 [inner sep=0.5mm,
 place/.style={circle,draw=black!100,fill=white!100,thick},scale=0.4]
 \path (0,0)node(A) [place,label=0:{\footnotesize$|123$}]{}
 (0,2)node(C) [place,label=0:{\footnotesize$|023$}]{}
 (0,4)node(D) [place,label=0:{\footnotesize$|013$}]{}
 (2,6)node(E) [place,label=0:{\footnotesize$|003$}]{}
 (0,8)node(F) [place,label=0:{\footnotesize$|002$}]{}
 (0,10)node(G) [place,label=0:{\footnotesize$|001$}]{}
 (-2,6)node(H) [place,label=180:{\footnotesize$|012$}]{}
 (0,12)node (B)[place,label=180:{$S(3,0): \quad $},label=0:{\footnotesize$|000$}]{};

 \draw (A)--(C)--(D)--(E)--(F)--(G)--(B);
 \draw (D)--(H)--(F);

  \end{tikzpicture}

 \vspace{0.1mm}

 \begin{tikzpicture}
 [inner sep=0.5mm,
 place/.style={circle,draw=black!100,fill=white!100,thick},scale=0.4]
\path (0,0)node(A) [place,label=270:{\footnotesize$00|12$}]{}
 (0,3)node(B) [place,label=180:{\footnotesize$00|02$}]{}
 (4,1)node(D) [place,label=270:{\footnotesize$10|12$}]{}
 (8,2)node(E) [place,label=270:{\footnotesize$20|12$}]{}
 (8,5)node(F) [place,label=145:{\footnotesize$20|02$}]{}
 (12,3)node(G) [place,label=270:{\footnotesize$21|12$}]{}
 (12,6)node(H) [place,label=145:{\footnotesize$21|02$}]{}
 (0,6)node(L) [place,label=180:{\footnotesize$00|01$}]{}
 (0,9)node(M) [place,label=180:{$ S(4,2): \qquad \qquad $},label=90:{\footnotesize$00|00$}]{}
 (4,7)node(N) [place,label=145:{\footnotesize$10|01$}]{}
 (4,10)node(O) [place,label=90:{\footnotesize$10|00$}]{}
  (8,8)node(P) [place,label=145:{\footnotesize$20|01$}]{}
 (8,11)node(Q) [place,label=90:{\footnotesize$20|00$}]{}
  (12,9)node(R) [place,label=145:{\footnotesize$21|01$}]{}
 (12,12)node(S) [place,label=90:{\footnotesize$21|00$}]{}

 (4,4)node (C)[place,label=145:{\footnotesize$10|02$}]{};

 \draw (A)--(B)--(C)--(F)--(H);
 \draw (A)--(D)--(E)--(G)--(H);
 \draw (A)--(B)--(L)--(M);
 \draw (F)--(P)--(Q);
 \draw (C)--(N)--(O);
 \draw (G)--(H)--(R)--(S);
 \draw (L)--(N)--(P)--(R);
 \draw (M)--(O)--(Q)--(S);
 \draw (D)--(C);
 \draw (E)--(F);
 \end{tikzpicture}

 \end{center}


\begin{proposition} \label{CartesianIsomorphism}
If $0\le r \le n,$ then
$S(n,r) \cong S(r,r) \times S(n-r,0).$
\end{proposition}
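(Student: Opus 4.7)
The plan is to exhibit an explicit lattice isomorphism by splitting each string at the vertical bar. Define
$$\varphi: S(n,r) \longrightarrow S(r,r) \times S(n-r,0)$$
by
$$\varphi\bigl(i_1 \cdots i_r \,|\, j_1 \cdots j_{n-r}\bigr) = \bigl(i_1 \cdots i_r \,|\, ,\; |\, j_1 \cdots j_{n-r}\bigr),$$
and its candidate inverse $\psi$ by concatenation $(u\,|, \,|\,v) \mapsto u\,|\,v$.

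First I would check that $\varphi$ is well-defined: the first coordinate uses only symbols from $\{\tilde 1,\ldots,\tilde r, 0^\bullet\}$, which is precisely the alphabet $A(r,r)$, and symmetrically the second coordinate uses only symbols from $\{0^\bullet, \overline 1,\ldots,\overline{n-r}\}$, the alphabet $A(n-r,0)$. The key observation is that the defining rules (\ref{prop3}) and (\ref{prop4}) for $S(n,r)$ decouple: the block $i_1 \succeq \cdots \succeq i_r \succeq 0^\bullet$ together with the position $p$ gives exactly the membership condition for $S(r,r)$, while the block $0^\bullet \succeq j_1 \succeq \cdots \succeq j_{n-r}$ together with the position $q$ gives exactly the condition for $S(n-r,0)$. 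The same decoupling shows that $\psi$ is well-defined, and $\varphi \circ \psi$ and $\psi \circ \varphi$ are obviously identities, so $\varphi$ is a bijection.

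It remains to verify that $\varphi$ is an order isomorphism. By the induced-order definition recalled in Section \ref{Sec1}, for strings $w = i_1 \cdots i_r \,|\, j_1 \cdots j_{n-r}$ and $w' = i_1' \cdots i_r' \,|\, j_1' \cdots j_{n-r}'$ in $S(n,r)$, the relation $w \sqsubseteq w'$ is equivalent to the conjunction of $i_k \preceq i_k'$ for all $k=1,\ldots,r$ and $j_\ell \preceq j_\ell'$ for all $\ell=1,\ldots,n-r$. The first conjunction is precisely the order relation in $S(r,r)$ between the first coordinates of $\varphi(w)$ and $\varphi(w')$, and similarly the second corresponds to the order in $S(n-r,0)$. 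Since the lattice structure on a cartesian product is the componentwise one, this shows $w \sqsubseteq w' \iff \varphi(w) \leq \varphi(w')$, so $\varphi$ is both order-preserving and order-reflecting.

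I do not expect a real obstacle here: the whole proposition is essentially the observation that the defining constraints on the left block $i_1 \cdots i_r$ and on the right block $j_1 \cdots j_{n-r}$ are independent, both for membership in $S(n,r)$ and for the order $\sqsubseteq$. The only point worth stating carefully is that the local indices $p$ and $q$ appearing in (\ref{pindex})--(\ref{qindex}) for $w \in S(n,r)$ coincide respectively with the index $p$ computed for $\varphi(w)$'s first coordinate inside $S(r,r)$ and with the index $q$ computed for $\varphi(w)$'s second coordinate inside $S(n-r,0)$, so the rule (\ref{prop4}) transfers verbatim.
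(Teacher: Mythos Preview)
Your argument is correct and is essentially the same as the paper's: the paper defines the map in the opposite direction, $\varphi(w_P,w_N)=i_1\cdots i_r\,|\,j_1\cdots j_{n-r}$ for $w_P=i_1\cdots i_r|$ and $w_N=|j_1\cdots j_{n-r}$, and simply asserts that it is easily verified to be an isomorphism. You have supplied the details of that verification (well-definedness, bijectivity, and the componentwise nature of $\sqsubseteq$), which the paper omits.
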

\begin{proof}
Let $(w_P,w_N) \in S(r,r) \times S(n-r,0),$ with $w_P=i_1 \cdots i_r|$ and $w_N=|j_1 \cdots j_{n-r},$ where $i_1, \cdots, i_r \in \{ \tilde{1}, \tilde{2}, \cdots \tilde{r}, 0^§\}$ and $j_1, \cdots ,j_{n-r} \in \{ 0^§, \overline{1},\overline{2}, \cdots, \overline{n-r} \}.$ \\
We set $\varphi (w_P,w_N)=i_1 \cdots i_r | j_1 \cdots j_{n-r}.$ It is easy to verity that $\varphi$ is an isomorphism between $S(r,r) \times S(n-r,0)$ and $S(n,r).$
\end{proof}
If we don't want to specify which elements of a string $w$ are in $\{ \tilde{1}, \cdots, \tilde{r},  0^§\}$ and which are in $\{
0^§, \overline{1}, \cdots \overline{n-r} \},$ we simply write $w=l_1 \cdots l_n,$ without specifying which $l_i$'s are in $\{ \tilde{1}, \cdots, \tilde{r}, 0^§\}$ and which are in $\{ 0^§, \overline{1}, \cdots \overline{n-r} \}.$ In any case, the order will be $l_1 \succeq l_2 \succeq \cdots \succeq l_n.$ \\

If $l,q \in A(n,r),$ we will set
$$
\delta (l,q) = \left\{ \begin{array}{lll}
                        \emptyset \,\,\, \textrm{if} \,\,\, l=q \\
                        (l,q) \,\,\, \textrm{if} \,\,\, l \neq q.
                        \end{array}
                        \right.
$$
If $w=l_1 \cdots l_n$ and $w'=l_1^{'} \cdots l_n^{'}$ are two strings in $S(n,r)$ we will set
$$
\Delta(w,w')=(\delta(l_1,l^{'}_1), \cdots, \delta(l_n,l^{'}_n)).
$$
\begin{proposition} \label{cover}
Let $w=l_1 \cdots l_n$ and $w'=l_1^{'} \cdots l_n^{'}$ be two strings in $S(n,r).$ Then:
$$
  w \models w'  \Longleftrightarrow
$$
$$
 \Delta(w,w')=(\emptyset, \cdots, \emptyset,(l_k,l'_k), \emptyset , \cdots ,\emptyset) \,\,\, \textrm{for some} \,\,\, k \in \{ 1, \cdots, n \} \,\,\, \textrm{where} \,\,\,l_k \vdash l'_k .
$$
\end{proposition}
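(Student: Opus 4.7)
The direction $(\Leftarrow)$ is immediate. If $w$ and $w'$ differ only at position $k$ with $l_k \vdash l'_k$ in $A(n,r)$, then any $w'' \in S(n,r)$ satisfying $w \sqsubseteq w'' \sqsubseteq w'$ must agree with both at every other coordinate and satisfy $l_k \preceq l''_k \preceq l'_k$ at position $k$; but $l_k \vdash l'_k$ forces $l''_k \in \{l_k, l'_k\}$, so $w'' \in \{w, w'\}$.

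For $(\Rightarrow)$ I would reduce to a single factor via Proposition \ref{CartesianIsomorphism}. Since $S(n,r) \cong S(r,r) \times S(n-r,0)$ and covers in a direct product of posets live in exactly one factor (the other coordinate being fixed), a covering pair $w \models w'$ in $S(n,r)$ comes from a covering pair in either $S(r,r)$ or $S(n-r,0)$; the two cases are structurally symmetric, so it suffices to treat $S(r,r)$. Identify its elements with integer sequences $(a_1,\ldots,a_r)$ satisfying $a_1 > \cdots > a_p > 0 = a_{p+1} = \cdots = a_r$ via the correspondence $\tilde{m} \leftrightarrow m$ and $0 \leftrightarrow 0$; the induced order is componentwise.

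Assume that $w \models w'$ in $S(r,r)$ corresponds to sequences $(a_k)$ and $(b_k)$, and suppose for contradiction that either they disagree in more than one coordinate, or in exactly one coordinate $k_0$ with $b_{k_0} \geq a_{k_0}+2$. Set $k_0 := \min\{k : a_k \neq b_k\}$ and let $w''$ be obtained from $w$ by replacing $a_{k_0}$ with $a_{k_0}+1$. By minimality of $k_0$ one has $a_{k_0-1} = b_{k_0-1}$ (when $k_0 \geq 2$); since $b_{k_0} \geq a_{k_0}+1 \geq 1$ is nonzero, so is $b_{k_0-1}$, and the strict decrease rule applied to $(b_k)$ yields
\[
a_{k_0-1} \;=\; b_{k_0-1} \;\geq\; b_{k_0}+1 \;\geq\; a_{k_0}+2 \;>\; a_{k_0}+1,
\]
while the inequality $a_{k_0+1} \leq \max(0,a_{k_0}-1) < a_{k_0}+1$ handles the right neighbor; this shows $w'' \in S(r,r)$. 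Plainly $w \sqsubset w''$, and $w'' \sqsubset w'$ follows either from a second surviving index of disagreement or from the hypothesised gap $a_{k_0}+1 < b_{k_0}$, contradicting $w \models w'$.

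The main technical obstacle is the verification $w'' \in S(r,r)$, i.e.\ checking that a single-coordinate increment respects the structural rules (\ref{prop3}) and (\ref{prop4}); the minimality of $k_0$ is the pivot, because it imports the strict inequalities already available on the $w'$-side into the immediate left neighborhood of the modified position. On the symmetric factor $S(n-r,0)$ the argument runs with ``decrement'' in place of ``increment'', since for the negative symbols a covering step in $A(n,r)$ is $\overline{m} \vdash \overline{m-1}$ (or $\overline{1} \vdash 0^{\circ}$).
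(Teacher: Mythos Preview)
Your argument is correct. The $(\Leftarrow)$ direction is clean, and for $(\Rightarrow)$ the reduction via Proposition~\ref{CartesianIsomorphism} together with the standard fact that covers in a direct product of posets occur in exactly one factor is a legitimate and efficient shortcut; the verification that the incremented string $w''$ lies in $S(r,r)$ is carried out carefully.

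The paper takes a different route: it works directly in $S(n,r)$ without factoring and argues by a three--case contradiction (some coordinate differs by more than a cover; at least two coordinates differ, each by a cover; no coordinate differs), in each nontrivial case exhibiting an explicit intermediate string. The substantive difference is that you insist on modifying the \emph{minimal} index $k_0$ of disagreement and then use that minimality to pull the strict inequality $b_{k_0-1}>b_{k_0}$ across from the $w'$--side, which is precisely what guarantees that the incremented string still satisfies the strict--decrease constraint~(\ref{prop4}). The paper's construction does not make this choice and does not verify membership of the intermediate string in $S(n,r)$; indeed an arbitrary choice can fail (e.g.\ from $w=10|$ to $w'=21|$ in $S(2,2)$, incrementing the second coordinate of $w$ yields $11|\notin S(2,2)$, while incrementing the first gives $20|\in S(2,2)$). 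So your approach, besides being structurally tidier through the product decomposition, is also more explicit about the one genuinely delicate point in the argument.
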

\begin{proof}
$\Rightarrow$ By contradiction, we distinguish three cases:

1) there exists a couple $(l_k,l'_k)$ different from $\emptyset$ in $\Delta(w,w')$ such that $l_k \nvdash l'_k$. Since by hypothesis $w \models w'$, we have $w \sqsubset w'$; therefore it must be $l_k \prec l'_k$ and, for some $l \in A(n,r),$ it holds
$$l_k \prec l \prec l'_k.
$$

Hence the string $w_l=l_1 \cdots l_{k-1} l l_{k+1} \cdots l_n$ is such that $w \sqsubset w_l \sqsubset w',$ against the hypothesis.

2) there exist at least two couples $(l_k, l'_k),$ $(l_s,l'_s)$ with $(s>k)$ different from $\emptyset$ in $\Delta(w,w'),$ such that $l_k \vdash l'_k$  and $l_s \vdash l'_s$. Then, if we consider the string:
$$
u=l_1 \cdots l_{k-1}l_k l_{k+1} \cdots l_{s-1}l^{'}_s l_{s+1} \cdots l_n,
$$
it follows that $w \sqsubset u \sqsubset w',$ against the hypothesis.

3) all the components of $\Delta(w,w')$ are equal to $\emptyset.$ In this case, by definition of $\Delta(w,w')$ we will have that $w=w',$ against the hypothesis.

$\Leftarrow$ By hypothesis, we have that $w \sqsubseteq w'$ because  $\Delta(w,w')=(\emptyset, \cdots, \emptyset,(l_k,l'_k), \emptyset , \cdots ,\emptyset)$ with $l_k \vdash l'_k.$ Suppose that the thesis is false, then there exists a $w'' \in S(n,r)$ such that $w \sqsubset w'' \sqsubset w'.$ Let $w''=l_1^{''} \cdots l_n^{''}.$ By hypothesis it follows that
$$
l_i=l_i^{''}=l_i^{'}
$$
if $i\neq k,$ and $l_k \prec l_k^{''} \prec l_k^{'},$
and hence $l_k \nvdash l_k^{'},$ against the hypothesis.
\end{proof}
\medskip
We define now the function $\rho: S(n,r) \to \mathbb{N}_0$ as follows:
\\ if $w=i_1 \cdots i_r|j_1 \cdots j_{n-r} \in S(n,r)$ and we consider the symbols $i_1, \cdots,i_r, j_1, \cdots, j_{n-r}$ as non-negative integers (without $\tilde{}$ and $\bar{}$ ), then we set:
$$
\rho (w)= i_1 + \cdots + i_{r} + |j_1 -1| + \cdots + |j_{n-r} -(n-r)|=i_1+ \cdots + i_r + (1-j_1)+ \cdots + ((n-r)-j_{n-r}).
$$
\begin{proposition} \label{cover2}
The function $\rho$ satisfies the following two properties:
\begin{itemize}
\item[i)] $\rho(\hat{0})=0;$ \\
\item[ii)] if $w,w' \in S(n,r)$ and $w \models w'$, then $\rho(w')=\rho(w)+1.$
\end{itemize}
\end{proposition}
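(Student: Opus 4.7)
The plan is to handle (i) by a direct substitution and reduce (ii) to a short case analysis using Proposition \ref{cover}.

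For (i), since $\hat{0} = 0\cdots 0 \,|\, 1\, 2\, \cdots (n-r)$, every term $i_k$ vanishes and every term $(k - j_k)$ vanishes because $j_k = k$, so $\rho(\hat{0}) = 0$ immediately.

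For (ii), Proposition \ref{cover} tells us that $w \models w'$ forces $w$ and $w'$ to differ in a single coordinate $k$ with $l_k \vdash l'_k$ in the total order on $A(n,r)$. The idea is then to enumerate the four possible cover relations in the chain
\[
\overline{n-r} \prec \cdots \prec \overline{1} \prec 0^{\circ} \prec \tilde{1} \prec \cdots \prec \tilde{r},
\]
namely $\tilde{a} \vdash \tilde{a+1}$, $0^{\circ} \vdash \tilde{1}$, $\overline{1} \vdash 0^{\circ}$, and $\overline{b+1} \vdash \overline{b}$, and check in each case that exactly one summand of $\rho$ changes, and that its change is $+1$. If $k$ lies to the left of $\,|\,$, the contribution of position $k$ is simply the integer $i_k$, and both $\tilde{a} \to \tilde{a+1}$ and $0^{\circ} \to \tilde{1}$ increase this integer by $1$. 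If $k$ lies to the right of $\,|\,$ and is the $m$-th such position, the contribution is $m - j_k$, and both $\overline{b+1} \to \overline{b}$ and $\overline{1} \to 0^{\circ}$ raise this quantity by exactly $1$ (since $j_k$ decreases by $1$ in the integer identification). All other summands are unchanged because the other coordinates coincide in $w$ and $w'$, so $\rho(w') = \rho(w) + 1$.

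There is essentially no obstacle here: the only thing to be a little careful about is the sign convention for positions on the right of $\,|\,$, where the contribution is $m - j_k$ rather than $j_k$, so that moving \emph{up} in $A(n,r)$ (i.e.\ $\overline{b+1} \to \overline{b}$) corresponds to \emph{decreasing} the integer label and therefore \emph{increasing} the summand of $\rho$. Once this sign is tracked correctly, the four cases are completely symmetric and the verification is mechanical.
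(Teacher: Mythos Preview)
Your proof is correct and follows essentially the same approach as the paper: both reduce (ii) to Proposition~\ref{cover} and then perform a short case analysis on the type of cover $l_k \vdash l'_k$ in the chain $A(n,r)$, checking in each case that the unique changed summand of $\rho$ increases by exactly $1$. The only cosmetic difference is that the paper organizes the cases by the position of $t$ (left/right of $|$) and the value of $l_t$ (zero or nonzero), including an explicitly discarded impossible case, whereas you organize them by the four cover types in the chain; the content is the same.
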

\begin{proof}
$i)$ Since $\hat{0}=0 \cdots 0|12\cdots (n-r),$ the thesis follows by the definition of $\rho.$

$ii)$ If $(w=l_1 \cdots l_n) \models (w'=l_1^{'} \cdots l_n^{'})$, by Proposition \ref{cover} we have that $\Delta(w,w')=(\emptyset, \cdots, \emptyset, (l_t,l'_t), \emptyset, \cdots \emptyset)$, for some $t \in \{ 1, \cdots, n \}$, with $l_t \vdash l'_t$. We distinguish different cases:

1) suppose that $1 \le t \le r$ and $l_t \succ 0^§.$ In this case we have that

$w=l_1 \cdots l_{t-1} l_t l_{t+1} \cdots l_r | l_{r+1} \cdots l_n$ and $w'=l_1 \cdots l_{t-1} (l_t +1) l_{t+1} \cdots l_r | l_{r+1} \cdots l_n$.

Hence $\rho (w')=l_1 + \cdots + l_{t-1} + (l_t +1) + l_{t+1} + \cdots + l_r + \sum_{k=1}^{n-r} (k-l_{r+k})=\sum_{k=1}^{r} l_k+\sum_{k=1}^{n-r}(k-l_{r+k})+1=\rho(w)+1.$

2) Suppose that $1 \le t \le r$ and $l_t=0^§ .$ In this case we have that $l'_t=1,$ hence $w=l_1 \cdots l_{t-1} 00 \cdots 0|l_{r+1} \cdots l_n$ and $w'=l_1 \cdots l_{t-1} 1 0 \cdots 0|l_{r+1} \cdots l_n,$ from which it holds that $\rho(w')=\rho(w) +1.$ \\
3) Suppose that $(r+1) \le t \le n$ and that $l_t=0^§.$ In this case we have a contradiction because there doesn't exist an element $l'_t$ in $\{ 0^§, \overline{1}, \cdots, \overline{n-r} \}$ which covers $0^§.$

4) Suppose that $(r+1) \le t \le n$ and that $l_t \prec 0^§;$ since we consider $l_t$ as an integer, it means that $1 \le l_t \le (n-r).$ In this case we have that $w=l_1 \cdots l_r | l_{r+1} \cdots l_{t-1} l_t l_{t+1} \cdots l_n$ and $w'=l_1 \cdots l_r | l_{r+1} \cdots l_{t-1} (l_t -1) l_{t+1} \cdots l_n.$ Therefore $\rho(w')= \sum_{i=1}^{r} l_i + \sum_{k=1,k\neq t-r}^{n-r} (k-l_{r+k})+[(t-r)-(l_t -1)]=\sum_{i=1}^r l_i + \sum_{k=1}^{n-r} (k-l_{r+k})+1=\rho(w)+1.$

\end{proof}

\begin{proposition}
$S(n,r)$ is a graded lattice having rank $R(n,r) = \binom{r+1}{2} + \binom{n-r+1}{2}$ and its rank function coincides with $\rho$.
\end{proposition}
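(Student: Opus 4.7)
The plan is to reduce the statement almost entirely to Proposition \ref{cover2}, which has already established the two decisive facts: $\rho(\hat{0}) = 0$, and $\rho$ increases by exactly one on every cover. Once these are in hand, the gradedness of $S(n,r)$ and the identification of $\rho$ as the rank function come for free, and the value of the total rank $R(n,r)$ reduces to a direct arithmetic evaluation at $\hat{1}$.

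First I would verify that $S(n,r)$ is graded with $\rho$ as its rank function. Fix any $w \in S(n,r)$ and any saturated chain $\hat{0} = w_0 \models w_1 \models \cdots \models w_k = w$ from $\hat{0}$ to $w$. A short induction on $i$, using Proposition \ref{cover2}(i) for the base case and Proposition \ref{cover2}(ii) for the inductive step, gives $\rho(w_i) = i$ for each $0 \le i \le k$, hence $k = \rho(w)$. Since $w$ and the chain were arbitrary, every saturated chain from $\hat{0}$ to a given element has the same length $\rho(w)$, which is precisely the defining property of a graded lattice with rank function $\rho$.

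It then remains to compute $R(n,r) = \rho(\hat{1})$. Recalling that $\hat{1} = r(r-1)\cdots 1 \mid 0\cdots 0$, I would simply plug into the definition of $\rho$ introduced just before Proposition \ref{cover2}:
\[
\rho(\hat{1}) \;=\; \sum_{k=1}^{r} k \;+\; \sum_{k=1}^{n-r} \bigl(k - 0\bigr) \;=\; \binom{r+1}{2} + \binom{n-r+1}{2}.
\]

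There is no serious obstacle here: all the combinatorial content was already absorbed into Proposition \ref{cover} (which pins down the cover relations explicitly) and Proposition \ref{cover2} (which shows $\rho$ respects them). The only conceptual point to check is the standard equivalence between possessing a \emph{local} rank function (vanishing at $\hat{0}$ and jumping by one on covers) and being graded in the \emph{global} sense (all saturated chains from $\hat{0}$ to a fixed element having the same length); this equivalence is supplied by the inductive argument above, and it relies implicitly on the fact that $S(n,r)$ is finite so that every chain from $\hat{0}$ to $w$ can be refined to a saturated one.
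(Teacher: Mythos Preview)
Your argument is correct, but it follows a different route from the paper's. The paper first invokes the general fact (cited from \cite{stanley-vol1}) that every finite distributive lattice is graded, so gradedness comes for free from Proposition~\ref{distributS(n,r)}; it then writes down an explicit maximal chain from $\hat{0}$ to $\hat{1}$ and counts its elements block by block to obtain the rank $R(n,r)$; only at the end does it appeal to the uniqueness of the rank function satisfying (i) and (ii) of Proposition~\ref{cover2} to identify it with $\rho$. You instead use Proposition~\ref{cover2} as the \emph{starting} point: the existence of a function increasing by exactly one on covers already forces all saturated chains from $\hat{0}$ to $w$ to have length $\rho(w)$, which establishes gradedness and the rank function in one stroke, and then $R(n,r)$ drops out of the single evaluation $\rho(\hat{1})$. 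Your approach is shorter and entirely self-contained---it does not need the external theorem on distributive lattices, nor the lengthy explicit chain---while the paper's approach has the merit of displaying a concrete maximal chain, which may be illustrative in its own right.
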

\begin{proof}
A finite distributive lattice is also graded, (see \cite{stanley-vol1}), therefore $S(n,r)$ is graded by Proposition \ref{distributS(n,r)}.
In order to calculate the rank of $S(n,r),$ we need to determine a maximal chain and its length.
We consider the following chain $C$ in $S(n,r):$
$$
\begin{array}{ll}
\hat{1}=r (r-1) \cdots 2 1|0 0 \cdots 0 & \textrm{1 string with r elements}\\
 & \textrm{different from 0 on the left of}\,\,\, | \\
\end{array}
$$
$$
\vdots
$$
$$
\left\{\begin{array}{ll}
r (r-1) (r-2) 0 \cdots 0|0 0 \cdots 0 & \textrm{(r-2) strings with 3 elements} \\
\vdots & \textrm{different from 0 on the left of}\,\,\, | \\
r(r-1) 1 0 \cdots 0|0 0\cdots 0 &
\end{array}
\right.
$$
$$
\left\{ \begin{array}{ll}
r(r-1) 0 \cdots 0|0 0 \cdots 0 & \textrm{(r-1) strings with 2 elements} \\
\vdots & \textrm{different from 0 on the left of}\,\,\, | \\
r 1 0 \cdots 0|0 0 \cdots 0 &
\end{array}
\right.
$$
$$
\left\{ \begin{array}{ll}
r 0 \cdots 0|0 0 \cdots 0 & \textrm{r strings with 1 element} \\
\vdots & \textrm{different from 0 on the left of}\,\,\, | \\
1 0 \cdots 0|0 0 \cdots 0 &
\end{array}
\right.
$$

$$
0 0 \cdots 0|0 0 \cdots 0
$$

$$
\left\{\begin{array}{ll}
0 0 \cdots 0|0 0 \cdots 1 & \textrm{(n-r) strings with 1 element} \\
\vdots & \textrm{different from 0 on the right of}\,\,\, | \\
0 0 \cdots 0|0 0 \cdots (n-r) &
\end{array}
\right.
$$
$$
\vdots
$$
$$
\left\{\begin{array}{ll}
0 0 \cdots 0|0 0 1 (4 \cdots (n-r)) & \textrm{3 strings with (n-r-2) elements} \\
0 0 \cdots 0|0 0 2 (4 \cdots (n-r)) & \textrm{different from 0 on the right of}\,\,\, | \\
0 0 \cdots 0|0 0 3 (4 \cdots (n-r)) &
\end{array}
\right.
$$
$$
\left\{\begin{array}{ll}
0 0 \cdots 0|0 1 (3 4 \cdots (n-r)) & \textrm{2 strings with (n-r-1) elements} \\
0 0 \cdots 0|0 2 (3 4 \cdots (n-r)) & \textrm{different from 0 on the right of}\,\,\, |
\end{array}
\right.
$$
$$
\begin{array}{ll}
\hat{0}= 0 0 \cdots 0 0|(1 2 \cdots (n-r)) & \textrm{1 string with (n-r) elements} \\
& \textrm{different from 0 on the right of}\,\,\, | \\
\end{array}
$$
Therefore $C$ has exactly $(1+2+ \cdots + (r-1)+r) +1 + (1+2+ \cdots + (n-r))=\frac{r(r+1)}{2} + \frac{(n-r+1)(n-r)}{2} + 1= \binom{r+1}{2} + \binom{n-r+1}{2} +1$ elements and hence the length of $C$ is $\binom{r+1}{2} + \binom{n-r+1}{2} +1.$
By Proposition \ref{cover}, each element of the chain covers the previous one with respect to the order $\sqsubseteq$ in $S(n,r).$ Furthermore, $C$ has minimal element $\hat{0}$ (the minimum of $S(n,r)$) and maximal element $\hat{1}$ (the maximum of $S(n,r)$), hence $C$ is a maximal chain in $S(n,r).$ Since $C$ has $R(n,r)+1$ elements and $S(n,r)$ is graded, it follows that $S(n,r)$ has rank $R(n,r)$. Finally, since $S(n,r)$ is a graded lattice of rank $R(n,r)$ and it has $\hat{0}$ as minimal element, its rank function has to be the unique function defined on $S(n,r)$ and with values in $\{ 0,1, \cdots R(n,r) \}$ which satisfies the $i)$ and $ii)$ of Proposition \ref{cover2} (see \cite{stanley-vol1}). Hence such a function coincides with $\rho,$ by the uniqueness property.
\end{proof}

The following proposition shows that $w$ and $w^c$ are symmetric in the Hasse diagram of $S(n,r).$
\begin{proposition}\label{symmetryCompl}
If $w \in S(n,r),$ then $\rho(w)+\rho(w^c)=R(n,r).$
\end{proposition}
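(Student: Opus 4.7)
The plan is to rewrite $\rho(w)$ purely in terms of the two sets that encode $w^\ast$, then add the contributions for $w$ and $w^c$ and watch them collapse to $R(n,r)$. Fix $w = i_1\cdots i_r\,|\,j_1\cdots j_{n-r}$ with the integers $p,q$ from (\ref{pindex})--(\ref{qindex}), and let
\[
B \;=\; \{i_1,\ldots,i_p\} \;\subseteq\; \{1,\ldots,r\}, \qquad C \;=\; \{j_q,\ldots,j_{n-r}\} \;\subseteq\; \{1,\ldots,n-r\},
\]
regarding the symbols as ordinary positive integers. By construction these are exactly the two ``halves'' of $w^\ast$ under the bijection $*$.

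First I would compute $\rho(w)$ in terms of $B$ and $C$. Since $i_{p+1}=\cdots=i_r=0$ and the nonzero $i_\ell$'s run through $B$, the left summand of $\rho$ equals $\sum_{x\in B} x$. For the right summand, the indices $k=1,\ldots,q-1$ contribute $k-0=k$, while the indices $k=q,\ldots,n-r$ contribute $k-j_k$, and the multiset $\{j_q,\ldots,j_{n-r}\}$ is exactly $C$. Summing,
\[
\rho(w) \;=\; \sum_{x\in B} x \;+\; \sum_{k=1}^{n-r} k \;-\; \sum_{y\in C} y \;=\; \sum_{x\in B} x \;+\; \binom{n-r+1}{2} \;-\; \sum_{y\in C} y.
\]

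Next I would apply the same formula to $w^c$. By the definition of the complement, $(w^c)^\ast = (w^\ast)^\pi$, which means its two halves are $B' := \{1,\ldots,r\}\setminus B$ and $C' := \{1,\ldots,n-r\}\setminus C$. Hence
\[
\rho(w^c) \;=\; \sum_{x\in B'} x \;+\; \binom{n-r+1}{2} \;-\; \sum_{y\in C'} y.
\]
Using $\sum_{x\in B}x + \sum_{x\in B'} x = \binom{r+1}{2}$ and $\sum_{y\in C}y + \sum_{y\in C'} y = \binom{n-r+1}{2}$, adding the two displayed equations gives
\[
\rho(w) + \rho(w^c) \;=\; \binom{r+1}{2} + 2\binom{n-r+1}{2} - \binom{n-r+1}{2} \;=\; \binom{r+1}{2} + \binom{n-r+1}{2} \;=\; R(n,r),
\]
which is the claim.

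No real obstacle is expected; the statement is essentially a bookkeeping identity, and the only care needed is to check that both summands of the rank function admit the same clean ``set-sum + constant - set-sum'' form so that the two complementary contributions pair up correctly. The previously established description of the bijection $*$ and of its interaction with $(\cdot)^c$ is what makes this bookkeeping immediate.
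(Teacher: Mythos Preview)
Your proof is correct and follows essentially the same route as the paper's: both compute $\rho(w)+\rho(w^c)$ by observing that the nonzero left entries of $w$ and of $w^c$ partition $\{1,\ldots,r\}$, and likewise the nonzero right entries partition $\{1,\ldots,n-r\}$, so the sums combine to $\binom{r+1}{2}+\binom{n-r+1}{2}=R(n,r)$. The only difference is cosmetic: you make the sets $B,B',C,C'$ explicit, whereas the paper writes the same identity directly in terms of the coordinate sums $\sum i_k$, $\sum i'_k$, $\sum j_k$, $\sum j'_k$.
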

\begin{proof}
Let $w=i_1 \cdots i_r|j_1 \cdots j_{n-r}$ and $w^c=i_1^{'} \cdots i_r^{'}|j_1^{'} \cdots j_{n-r}^{'}.$ Then
$\rho(w)+\rho(w^c)=\sum_{k=1}^r i_k + \sum_{k=1}^{n-r} (k-j_k) + \sum_{k=1}^r i'_k + \sum_{k=1}^{n-r}(k-j_k^{'}).$
By definition of $w^c$ in $S(n,r),$ it follows that $\sum_{k=1}^r i_k + \sum_{k=1}^r i_k^{'} =\sum_{k=1}^r k$ and $\sum_{k=1}^{n-r} j_k + \sum_{k=1}^{n-r} j_k^{'} =\sum_{k=1}^{n-r} k$. Hence $\rho(w) + \rho(w^c)= \sum_{k=1}^r k + 2\sum_{k=1}^{n-r} k - \sum_{k=1}^{n-r} k=\sum_{k=1}^r k +\sum_{k=1}^{n-r} k =\binom{r+1}{2} +\binom{n-r+1}{2}=R(n,r)$.
\end{proof}

\section{The order reversing property of $\sqsubseteq$}\label{Sec3}

In general, $(S(n,r), \sqsubseteq, \hat{0},\hat{1})$ isn't a boolean lattice. For example,
if we take $w=54210|012 \in S(8,5),$ it is easy to verify that there doesn't exist an element $w' \in S(8,5)$ such that $w\wedge w'=\hat{0}$ and $w \vee w'=\hat{1}.$ In this section we will prove that the function
$w \in S(n,r) \mapsto w^c \in S(n,r)$ is {\it order reversing} with respect to the order $\sqsubseteq,$ in the sense that if $w_1 \sqsubseteq w_2,$ then $w_2^c \sqsubseteq w_1^c.$ In the sequel, we will see that this property is fundamental to prove many results of this paper.


\begin{proposition} \label{covcomp}
Let $w,w' \in S(n,r)$ be such that $w' \models w$  then  $w^c \models (w')^c.$
\end{proposition}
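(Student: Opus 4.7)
The plan is to exploit Proposition \ref{cover} in both directions: the hypothesis $w'\models w$ gives a single-coordinate difference between $w$ and $w'$ that I will transport through the set description $w^c=\overline{(w^*)^\pi}$ to a single-coordinate difference between $w^c$ and $(w')^c$, which in turn yields the desired covering.

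By Proposition \ref{cover}, there is a unique position $k$ with $\Delta(w',w)=(\emptyset,\dots,(l'_k,l_k),\dots,\emptyset)$ and $l'_k\vdash l_k$ in the chain $A(n,r)$. The covering pairs of $A(n,r)$ fall into four families: the two ``boundary'' pairs $\overline{1}\vdash 0^\circ$ and $0^\circ\vdash\tilde{1}$, and the two ``interior'' pairs $\tilde{j}\vdash\widetilde{j+1}$ and $\overline{j+1}\vdash\overline{j}$. In each boundary case, $w^*$ and $(w')^*$ differ by a single insertion/deletion of an element of $A(n,r)\setminus\{0^\circ\}$; in each interior case, they differ by swapping one element for its chain-neighbour. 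Passing to the complement in $A(n,r)\setminus\{0^\circ\}$ turns a deletion into an insertion (and vice versa) and reverses the direction of a swap, so the same single-element description holds for $(w^*)^\pi$ and $((w')^*)^\pi$, with the roles interchanged.

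The remaining step is to re-encode these sets via $\overline{\,\cdot\,}$ and verify that only one coordinate of the resulting string shifts. The key observation is that, in the decreasing enumeration of the $\tilde{\,}$-part (and separately of the $\bar{\,}$-part) of $(w^*)^\pi$, nothing can lie strictly between two chain-adjacent symbols $\tilde{j}$ and $\widetilde{j+1}$, so swapping one for the other in the underlying set just replaces a single entry of the string by its chain-neighbour; insertions or deletions of $\tilde{1}$ or $\overline{1}$ merely toggle the adjacent boundary entry of the corresponding half. In each of the four cases I therefore obtain a unique coordinate $k^*$ at which $w^c$ and $(w')^c$ differ, with $w^c$ carrying the smaller and $(w')^c$ the larger of a covering pair in $A(n,r)$. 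A final application of Proposition \ref{cover} then delivers $w^c\models(w')^c$. The main technical nuisance is precisely this bookkeeping of positions across the four cases, but it is controlled entirely by the fact that consecutive-in-the-chain symbols admit nothing between them.
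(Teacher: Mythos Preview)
Your proposal is correct and follows essentially the same approach as the paper: both arguments use Proposition~\ref{cover} to reduce to a single-coordinate difference, split into the same four cases (the two ``boundary'' coverings involving $0^\circ$ and the two ``interior'' coverings $\tilde{j}\vdash\widetilde{j+1}$, $\overline{j+1}\vdash\overline{j}$), track the effect on $(w^*)^\pi$ versus $((w')^*)^\pi$, and then invoke the observation that no element of $A(n,r)$ lies strictly between chain-adjacent symbols to conclude that the resulting strings differ at a single covering coordinate. Your phrasing is more conceptual and compact, while the paper carries out the Case~3 position-matching argument explicitly, but the substance is the same.
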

\begin{proof}
$\bf{\textrm{Case} \,\,\, 1)}$\\
Let $w$ and $w'$ be distinct in the following way:
$$
w=i_1 \cdots i_{r-s-1} 1 0 \cdots 0| \cdots
$$
$$
w'=i_1 \cdots i_{r-s-1} 0 0 \cdots 0| \cdots ,
$$
where $i_1 \succ \cdots \succ i_{r-s-1} \succ \tilde{1}.$\\
Consider now $(w^*)^{\pi}$ and $((w')^*)^{\pi}$ : they are two elements of $\mathcal{P}(A(n,r) \setminus \{ 0^§ \}).$\\
In $(w^*)^{\pi}$ there are $(s)$ elements of $A(n,r)$ $\succ 0^§$ and in $((w')^*)^{\pi}$ there are $(s+1)$ elements of $A(n,r)$ $\succ  0^§.$ Furthermore, $\tilde{1} \in ((w')^*)^{\pi}$ and $\tilde{1} \notin (w^*)^{\pi},$ hence the symmetric difference between $(w^*)^{\pi}$ and $((w')^*)^{\pi}$ is equal to $\{ \tilde{1} \}.$ From this, it follows that:
$$
(w')^c=\overline{((w')^*)^{\pi}}=t_1 \cdots t_s 1 0 \cdots 0| \cdots
$$
$$
w^c=\overline{(w^*)^{\pi}}=t_1 \cdots t_s 0 0 \cdots 0| \cdots,
$$
where $\{t_1, \cdots, t_s, \tilde{1} \}=\{ i_1, \cdots , i_{r-s-1},\overline{1}, \cdots, \overline{n-r} \}^c$ in $A(n,r) \setminus \{ 0^§ \}$ and $t_1 \succ \cdots \succ t_s \succ \tilde{1}.$ By Proposition \ref{cover}, it follows that $(w')^c$ covers $w^c.$ \\
$\bf{\textrm{Case} \,\,\, 2)}$\\
Adaptation of Case 1) to the elements on the right of $|$.\\
$\bf{\textrm{Case} \,\,\, 3)}$ \\
Let $k$ be the index in which $w$ and $w'$ are distinct, $1 \le k \le r,$, then:
$$
w = i_1 \cdots i_{k-1} (i_k +1) i_{k+1} \cdots i_p 0 \cdots 0| \cdots
$$
$$
w' = i_1 \cdots i_{k-1} i_k i_{k+1} \cdots i_p 0 \cdots 0| \cdots,
$$
with $i_1 \succ \cdots \succ i_{k-1} \succ i_k +1 \succ i_k \succ i_{k+1} \succ \cdots \succ i_p \succ  0^§.$ \\
Then, in $(w^*)^{\pi}$ there are exactly $q=(r-p)$ elements $\succ  0^§$ and in $((w')^*)^{\pi}$ there are also $q=(r-p)$ elements $\succ  0^§.$ Moreover, it follows that $i_k \in (w^*)^{\pi} \setminus ((w')^*)^{\pi}$ and $i_{k+1} \in ((w')^*)^{\pi} \setminus (w^*)^{\pi},$ hence the symmetric difference between $(w^*)^{\pi}$ and $((w')^*)^{\pi}$ is equal to $\{ i_k, i_{k+1} \}.$ From this, it follows that:
$$
(w')^c=\overline{((w')^*)^{\pi}}: t_1 \cdots t_{l-1} (i_{k} +1) t_{l+1} \cdots t_q 0 \cdots 0 | \cdots
$$
$$
w^c=\overline{(w^*)^{\pi}}: t_1 \cdots t_{m-1} i_k t_{m+1} \cdots t_q 0 \cdots 0| \cdots,
$$
where $i_{k} +1$ appears in the $l$-th place ($1 \le l \le r$) in $(w')^c$ and $i_k$ appears in the $m$-th place ($1 \le m \le r$)
in $w^c,$ with
\begin{equation} \label{thets}
\{ t_1, \cdots t_{l-1},t_{l+1}, \cdots ,t_{q} \}=\{t_1, \cdots t_{m-1}, t_{m+1}, \cdots, t_q \},
\end{equation}
where $\{ t_1, \cdots ,t_q \}= \{ i_1, \cdots i_k, i_k +1, i_{k+1}, \cdots, i_p, \overline{1}, \cdots \overline{n-r} \}^{\pi}$ in $A(n,r) \setminus \{ 0^§ \}.$ We prove now that the $l$-th place coincides with the $m$-th place.
Let $t \in \{ t_{l+1}, \cdots ,t_q \}$ and suppose by contradiction that $t \notin \{ t_{m+1}, \cdots, t_q \}.$
By (\ref{thets}) it follows that $t \in \{ t_1, \cdots ,t_{m-1} \}$, hence we will have $i_{k} + 1 \succ t$ and $t \succ i_k$,
and hence $i_k +1 \succ t \succ i_k$ in $A(n,r)$ and this contradicts $i_k \vdash (i_k +1)$.\\
Let now $t \in \{ t_1, \cdots, t_{m-1} \}.$ Suppose by contradiction that $t \notin \{ t_1, \cdots, t_{l-1} \}.$ By (\ref{thets}) it follows that $t \in \{ t_{l+1}, \cdots ,t_q \},$ hence we will have $t \succ i_k$ and $i_{k} + 1 \succ t,$ by which $i_{k} + 1 \succ t \succ i_k,$ and this contradicts $i_k \vdash i_k +1$. By (\ref{thets}) hence follows that $m=l$, and this proves that $w^c \models (w')^c$.\\
$\bf{\textrm{Case} \,\,\, 4)}$ \\
Analogously to Case $3)$ with $k$ such that $r+1 \le k \le n.$
\end{proof}

\begin{proposition} \label{complementary}
If $w,w' \in S(n,r)$ are such that $w' \sqsubseteq w,$ then $w^c \sqsubseteq (w')^c.$
\end{proposition}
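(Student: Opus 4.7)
The plan is to reduce the general order-reversing property to the covering case already handled in Proposition \ref{covcomp}, exploiting the fact that $S(n,r)$ is graded.

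First I would dispose of the trivial case $w' = w$, where the conclusion $w^c \sqsubseteq (w')^c$ is immediate. So assume $w' \sqsubset w$. Since $S(n,r)$ is a finite graded lattice (a fact established in the previous section, with rank function $\rho$ satisfying $\rho(w) = \rho(w') + k$ for some $k \geq 1$), any strict comparability relation can be refined to a saturated chain. Explicitly, I would fix a saturated chain
\begin{equation*}
w' = w_0 \models w_1 \models w_2 \models \cdots \models w_k = w
\end{equation*}
connecting $w'$ to $w$ in $S(n,r)$, where $k = \rho(w) - \rho(w')$. Such a chain exists because one can repeatedly replace a strict inequality $u \sqsubset v$ by two pieces $u \sqsubset u' \sqsubseteq v$ with $u \models u'$, terminating in finitely many steps since $\rho$ takes values in $\{0,1,\ldots,R(n,r)\}$.

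Next, applying Proposition \ref{covcomp} to each covering relation $w_{i-1} \models w_i$ in the chain, I obtain $w_i^c \models w_{i-1}^c$ for every $i = 1, \ldots, k$. Stringing these together yields the reversed chain
\begin{equation*}
w^c = w_k^c \models w_{k-1}^c \models \cdots \models w_1^c \models w_0^c = (w')^c,
\end{equation*}
and by transitivity of $\sqsubseteq$ this gives $w^c \sqsubseteq (w')^c$, as desired.

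There is essentially no obstacle beyond formalizing the chain-refinement step: Proposition \ref{covcomp} does all the combinatorial work, and the grading guarantees that any comparable pair is joined by a finite sequence of covers. The only thing worth double-checking is that the chain-refinement procedure indeed terminates and stays inside $S(n,r)$, which follows immediately from the fact that $S(n,r)$ is a finite lattice in which every interval $[w',w]$ is itself a finite graded lattice (being a sublattice of a distributive lattice by Proposition \ref{distributS(n,r)}).
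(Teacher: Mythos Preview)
Your proof is correct and follows essentially the same approach as the paper: both arguments take a saturated chain from $w'$ to $w$, apply Proposition \ref{covcomp} to each covering step, and conclude by transitivity. Your version is simply more explicit about why such a chain exists and why the procedure terminates.
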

\begin{proof}
It is enough to consider a sequence of elements $w_0,w_1,\cdots, w_n$ such that $w'=w_0 \sqsubseteq w_1 \sqsubseteq \cdots \sqsubseteq w_{n-1} \sqsubseteq w_n =w$ where $w_i$ covers $w_{i-1}$ for $i=1, \cdots, n$ and apply Proposition \ref{covcomp}
to $w_{i-1} \models w_{i}.$
\end{proof}
In general, a poset $\mathbb{P}=(P, \le)$ is called an {\it involution poset} if there exists a map $':P \to P$ such that $(i)\,\,\,(x')'=x$ and $(ii) \,\,\,x\le y,$ then $y' \le x'$ for all $x,y \in P.$ Recent studies related to this particular class of posets can be find in \cite{agha-gree-2001} and in \cite{brenneman-altri}. Hence by Proposition \ref{complementary} and Remark \ref{InvolutionPropertyS(n,r)}, $(S(n,r), \sqsubseteq, ^c,\hat{0},\hat{1})$ is an involution and distributive bounded lattice.
If $w=i_1 \cdots i_r|j_1 \cdots j_{n-r}$ is an element of $S(n,r),$ with $0 \le r \le n$, we can also consider the symbols $i_1, \cdots i_r,j_1, \cdots ,j_{n-r}$ as elements in the alphabet $A(n,n-r)$, where $j_1, \cdots ,j_{n-r} \in \{ \widetilde{n-r} \succ \cdots \succ \tilde{1} \succ 0^§ \}$ and $i_1, \cdots, i_r \in \{ 0^§ \succ \overline{1} \succ \cdots \succ \overline{r} \}$; in such case
we will set $w^t=j_{n-r} \cdots j_1 | i_r \cdots i_1.$ Then it holds that the map $w \in S(n,r) \mapsto w^t \in S(n,n-r)$ is bijective and it is such that
\begin{equation} \label{invinc}
w \sqsubseteq w' \,\,\, \textrm{in} \,\,\, S(n,r) \Longleftrightarrow (w')^t \sqsubseteq w^t \,\,\, \textrm{in} \,\,\, S(n,n-r),
\end{equation}
since $(w^t)^t=w,$ for each $w \in S(n,r)$.
 Also the map $w \in S(n,r) \mapsto w^c \in S(n,r)$ is bijective, and since $(w^c)^c=w,$ by Proposition \ref{complementary}, it follows that
\begin{equation} \label{invinc2}
w \sqsubseteq w' \,\,\, \textrm{in} \,\,\, S(n,r) \Longleftrightarrow (w')^c \sqsubseteq w^c \,\,\, \textrm{in} \,\,\, S(n,r).
\end{equation}
Therefore it holds the following isomorphism of lattices:
\begin{proposition} \label{prop8}
If $0 \le r \le n,$ then $S(n,r) \cong S(n,n-r).$
\end{proposition}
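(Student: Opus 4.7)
The plan is to combine the two order-reversing bijections that have just been established, namely $w \mapsto w^{t}$ from $S(n,r)$ to $S(n,n-r)$ (equation \eqref{invinc}) and $w \mapsto w^{c}$ from $S(n,r)$ to itself (equation \eqref{invinc2}). Each of them individually is an anti-isomorphism of posets, and composing two anti-isomorphisms yields an isomorphism, which is exactly what is needed.

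Concretely, I would define
\[
\Phi : S(n,r) \longrightarrow S(n,n-r), \qquad \Phi(w) \; = \; (w^{c})^{t},
\]
and prove it is a lattice isomorphism in three short steps. First, $\Phi$ is a bijection: the composition of the two bijections $^{c}$ and $^{t}$ is bijective, and using $(w^{c})^{c} = w$ and $(w^{t})^{t} = w$ one checks that the inverse is $\Psi : S(n,n-r) \to S(n,r)$ given by $\Psi(u) = (u^{t})^{c}$. Second, $\Phi$ is order-preserving: if $w \sqsubseteq w'$ in $S(n,r)$, then by \eqref{invinc2} we have $(w')^{c} \sqsubseteq w^{c}$ in $S(n,r)$, and then applying \eqref{invinc} in $S(n,n-r)$ with the roles of the two strings reversed we obtain $(w^{c})^{t} \sqsubseteq ((w')^{c})^{t}$, that is $\Phi(w) \sqsubseteq \Phi(w')$. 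The same argument applied to $\Psi$ shows that $\Psi$ is also order-preserving.

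Third, since $\Phi$ is an order-preserving bijection whose inverse $\Psi$ is also order-preserving, it automatically commutes with meets and joins: for $w_{1},w_{2}\in S(n,r)$, $\Phi(w_{1}\wedge w_{2})$ is a lower bound of $\Phi(w_{1})$ and $\Phi(w_{2})$, and any common lower bound $u$ of them satisfies $\Psi(u) \sqsubseteq w_{1} \wedge w_{2}$, whence $u \sqsubseteq \Phi(w_{1}\wedge w_{2})$; dually for joins. Hence $\Phi$ is a lattice isomorphism, which establishes $S(n,r) \cong S(n,n-r)$.

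I do not expect any serious obstacle: the two ingredients $^{c}$ and $^{t}$ have already been constructed, shown to be involutions, and shown to reverse the order $\sqsubseteq$. The only point that requires a bit of care is bookkeeping the switch of ambient lattice when applying $^{t}$, since the alphabets $A(n,r)$ and $A(n,n-r)$ play symmetric but distinct roles; once the inverse $\Psi$ is written down explicitly, the verification that $\Phi\circ\Psi$ and $\Psi\circ\Phi$ are identities reduces to the two involution identities $(w^c)^c=w$ and $(w^t)^t=w$.
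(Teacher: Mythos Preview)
Your proof is correct and follows essentially the same approach as the paper: compose the two order-reversing bijections $^{c}$ and $^{t}$ to obtain an order-preserving bijection, hence a lattice isomorphism. The only cosmetic difference is the order of composition---the paper uses $\varphi(w)=(w^{t})^{c}$ (transpose first, then complement in $S(n,n-r)$) whereas you use $\Phi(w)=(w^{c})^{t}$---and you spell out the inverse and the meet/join preservation more explicitly than the paper does.
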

\begin{proof}
It is enough to consider the map $\varphi : S(n,r) \to S(n,n-r)$ defined by $\varphi(w)=(w^t)^c.$ Since the map $\varphi$ is the composition of the map $w \in S(n,r) \mapsto w^t \in S(n,n-r)$ with the map $u \in S(n,n-r) \mapsto u^c \in S(n,n-r),$ it follows that $\varphi$ is bijective. Furthermore, by (\ref{invinc}) and (\ref{invinc2}), it holds that
$$
w \sqsubseteq w' \,\,\, \textrm{in} \,\,\, S(n,r) \Longleftrightarrow \varphi (w) \sqsubseteq \varphi (w') \,\,\, \textrm{in} \,\,\, S(n,n-r).
$$
Hence $\varphi$ is an isomorphism of lattices.
\end{proof}
\begin{example}
$S(3,1) \cong S(3,2)$ and for example $\varphi(0|01)=((0|01)^t)^c=(10|0)^c=(20|1),$ or $\varphi(1|02)=((1|02)^t)^c=(20|1)^c=(10|0),$ see the Hasse diagrams of Section \ref{Sec2}.
\end{example}

\section{An Algorithmic Method for generating $S(n,r)$} \label{Sec4}

In this section we describe a generating algorithm for $S(n,r),$ which will permit us to fix an order, from the left to the right on each subset of the lattice composed by elements with fixed rank. In the Hasse diagram we will provide an algorithm for giving, on each line, a total order from the left to the right.\\
Set $w=l_1 \cdots l_n \in S(n,r)$ and let $k \in \{ 1, \cdots, n \}$ be fixed. If there exists an element $l_k^{'} \in A(n,r)$ which covers $l_k$ with respect to the order $\succ$ and such that $(l_1, \cdots, l_{k-1},l_k^{'}, l_{k+1}, \cdots l_n) \in S(n,r),$ we will say that $k$ is a {\it generating index} for the string $w.$ If $k$ is a generating index for $w$ and if $l_k^{'}$ is an element of $A(n,r)$ which covers $l_k,$ the string $(l_1, \cdots l_{k-1} l_k^{'} l_{k+1} \cdots l_n)$ will be called {\it string of index $k$ generated by $w$} and it will be denoted with the symbol $w[k].$ If $k$ is a generating index of $w$ contained in $\{ 1, \cdots , r \},$ we will say that $k$ is a {\it positive generating index} of $w;$ if $k$ is a generating index of $w$ contained in $\{ r+1, \cdots, n \},$ we will say that $k$ is a {\it negative generating index} of $w.$ Let now $s_1, \cdots, s_p$ be the positive generating indexes of $w$ (if $p=0$ there are no positive generating indexes of $w$) and $t_1, \cdots t_q$ the negative generating indexes of $w$ (if $q=0$ there are no negative generating indexes of $w$), with $s_1 < \cdots < s_p < t_1 < \cdots < t_q.$ \\
On the set $\{ w[s_1], \cdots, w[s_p], w[t_1], \cdots ,w[t_q] \}$ we introduce the following formal order $\lessdot:$
\begin{equation} \label{eq9}
w[s_1] \lessdot w[s_2] \lessdot \cdots \lessdot w[s_p] \lessdot w[t_q] \lessdot \cdots \lessdot w[t_1]
\end{equation}
In the Hasse diagram of $S(n,r)$ we will write the string generated by $w$ following the order given in $(\ref{eq9})$ :
$w[s_1]$ on the left of $w[s_2],$ $\cdots$ $,w[s_p]$ on the left of $w[t_q],$ $\cdots$, $w[t_2]$ on the left of $w[t_1]$.

\begin{example}
If $n=9,$ $r=5$ and $w=52000|0024,$ the positive generating indexes of $w$ are $2$ and $3$, while the negative generating indexes are $8$ and $9$, therefore, by (\ref{eq9}), we write $w[2]=53000|0024 \lessdot w[3]=52100|0024 \lessdot w[9]=52000|0023 \lessdot w[8]=52000|0014$.
\end{example}

Let now $k \in \{0,1, \cdots, R(n,r) \}$ be fixed. We denote $S_k(n,r)$ the set of elements of $S(n,r)$
with constant rank $k.$ We want to define a total order $\leftharpoondown$ on $S_k(n,r).$ If $k=0$ there is nothing to say because there is a unique element of rank $0.$ If $k=1,$ $S_1 (n,r)$ coincides with the set of strings generated by $0 \cdots 0|12 \cdots (n-r)$ and, in this case, $\leftharpoondown$ will coincide with the order $\lessdot$ given in (\ref{eq9}). \\
Let now $k$ be an integer such that $1 \le k < R(n,r)$ and suppose to have ordered with the total order $\leftharpoondown$ all the strings of $S_k(n,r).$ Suppose that $S_k(n,r)=\{ w_1, \cdots, w_m \}$ and that $w_1 \leftharpoondown w_2 \leftharpoondown \cdots \leftharpoondown w_m$ (in the Hasse diagram of $S(n,r)$ this implies that $w_1, \cdots ,w_m$ are written from the left to the right). \\
Let $w_i^1, \cdots , w_i^{k_i}$ be the strings of $S(n,r)$ generated by $w_i,$ for $i=1, \cdots, m.$ By (\ref{eq9}),
we can suppose that
\begin{equation*}
w_1^1 \lessdot \cdots \lessdot w_1^{k_1}, \cdots ,w_m^1 \lessdot \cdots \lessdot w_m^{k_m}.
\end{equation*}
We construct now $\leftharpoondown$ as follows:\\
at first we set
\begin{equation} \label{eq10}
w_1^1\leftharpoondown \cdots \leftharpoondown w_1^{k_1} \leftharpoondown w_2^1 \leftharpoondown \cdots \leftharpoondown w_2^{k_2} \leftharpoondown \cdots \leftharpoondown w_m^1 \leftharpoondown \cdots \leftharpoondown w_m^{k_m}.
\end{equation}
Then we have to eliminate in (\ref{eq10}) all the repeated strings.
\\ We examine the sequence (\ref{eq10}) starting from $w_1^1$ and continuing to the right till to $w_m^{k_m}.$ For $i=1, \cdots ,m$ if the string $w_i^{k_i}$ already appears among the strings on its left, then it will be deleted from the list (\ref{eq10}), otherwise it stays. At the end of the process it will remain the strings of $S_{k+1} (n,r),$ each one appearing only one time in the list (\ref{eq10}).

Let us observe that we have chosen to order the strings generated by $w$ as in (\ref{eq9}) because this choice gives great emphasis to the partition of $S(n,r)$ into two sublattices that we will describe in the next section; however we can also choose a different order with respect to  (\ref{eq9}), in fact, in some cases it is more useful to consider the following order on the subset of strings generated by $w$ :

\begin{equation} \label{eq9->}
w[s_1] \eqslantless w[s_2] \eqslantless \cdots \eqslantless w[s_p] \eqslantless w[t_1] \eqslantless \cdots \eqslantless w[t_q].
\end{equation}

In any case, no matter what is the chosen order, (\ref{eq9}) or (\ref{eq9->}), for the subset of the strings generated by $w$, the previous algorithm stays unchanged in all the other aspects. We will say that the previous generative algorithm for $S(n,r)$ is of type $\leftrightarrows$ if it is based on the order (\ref{eq9}), and of type $\rightrightarrows$ if it is based on the order (\ref{eq9->}). In the sequel of this paper we use the generative algorithm $\leftrightarrows$.

\begin{example}
Let $n=6,$ $r=3$ and $k=3.$ Then we have that $$S_0 (6,3)=\{000|123\}.$$
The generating indexes of $000|123$ are $1$ and $4$, therefore, by (\ref{eq9}), $$000|123[1]=100|123 \lessdot 000|123[4]= 000|023;$$
 hence
$$S_1 (6,3)= \{ 100|123 \leftharpoondown  000|023\}.$$
The generating indexes of $100|123$ are $1$ and $4$, therefore, as above, $$100|123[1]=200|123 \lessdot 100|123[4]= 100|023,$$
the generating indexes of $000|023$ are $1$ and $5$: $$000|023[1]=100|023 \lessdot 000|123[5]= 000|013;$$
hence (after having deleted the repeated strings)
$$S_2 (6,3)= \{ 200|123 \leftharpoondown  100|023 \leftharpoondown  000|013 \}.$$
The generating indexes of $200|123$ are $1$, $2$ and $4$: $$200|123[1]=300|123 \lessdot 200|123[2]= 210|023 \lessdot 200|123[4]=200|023,$$
the generating indexes of $100|023$ are $1$ and $5$: $$100|023[1]=200|023 \lessdot 100|023[5]= 100|013,$$
the generating indexes of $000|013$ are $1$, $5$ and $6$: $$000|013[1]=100|013 \lessdot 000|013[6]= 000|012 \lessdot 000|013[5]=000|003;$$
hence (after having deleted the repeated strings)
$$S_3 (6,3)= \{ 300|123 \leftharpoondown  210|123 \leftharpoondown  200|023 \leftharpoondown  100|013 \leftharpoondown  000|012 \leftharpoondown  000|003\}.$$

\end{example}

 In the next figure we have drawn the complete Hasse diagram of the lattice $S(6,3)$ in which each horizontal line represents the sub-poset $S_k (6,3)$ of the elements of rank $k,$ with $0 \le k \le 12,$ written in a totally ordered way from the left to the right following the total order $\leftharpoondown$ previously described.

\begin{center}

\begin{tikzpicture}
 [inner sep=1.0mm,
 place/.style={circle,draw=black!100,fill=green!100,thick},scale=0.4]

 \path
 (-24,0)node(1a) [place,label=270:{\footnotesize$000|123$}]{}

 (-27,3)node(1b) [place,label=180:{\footnotesize$100|123$}]{}
 (-21,3)node(2b) [place,label=0:{\footnotesize$000|023$}]{}

 (-30,6)node(1c) [place,label=180:{\footnotesize$200|123$}]{}
 (-24,6)node(2c) [place,label=90:{\footnotesize$100|023$}]{}
 (-18,6)node(3c) [place,label=0:{\footnotesize$000|013$}]{}

 (-35,9)node(1d) [place,label=180:{\footnotesize$300|123$}]{}
 (-31,9)node(2d) [place,label=180:{\footnotesize$210|123$}]{}
 (-27,9)node(3d) [place,label=180:{\footnotesize$200|023$}]{}
 (-21,9)node(4d) [place,label=0:{\footnotesize$100|013$}]{}
 (-17,9)node(5d) [place,label=0:{\footnotesize$000|012$}]{}
 (-13,9)node(6d) [place,label=0:{\footnotesize$000|003$}]{}

 (-35,12)node(1e) [place,label=180:{\footnotesize$310|123$}]{}
 (-31,12)node(2e) [place,label=180:{\footnotesize$300|023$}]{}
 (-27,12)node(3e) [place,label=180:{\footnotesize$210|023$}]{}
 (-24,12)node(4e) [place,label=90:{\footnotesize$200|013$}]{}
 (-21,12)node(5e) [place,label=0:{\footnotesize$100|012$}]{}
 (-17,12)node(6e) [place,label=0:{\footnotesize$100|003$}]{}
 (-13,12)node(7e) [place,label=0:{\footnotesize$000|002$}]{}

 (-38,15)node(1f) [place,label=180:{\footnotesize$320|123$}]{}
 (-34,15)node(2f) [place,label=180:{\footnotesize$310|023$}]{}
 (-30,15)node(3f) [place,label=180:{\footnotesize$300|013$}]{}
 (-26,15)node(4f) [place,label=180:{\footnotesize$210|013$}]{}
 (-22,15)node(5f) [place,label=0:{\footnotesize$200|012$}]{}
 (-18,15)node(6f) [place,label=0:{\footnotesize$200|003$}]{}
 (-14,15)node(7f) [place,label=0:{\footnotesize$100|002$}]{}
 (-10,15)node(8f) [place,label=0:{\footnotesize$000|001$}]{}

 (-42,18)node(1g) [place,label=180:{\footnotesize$321|123$}]{}
 (-38,18)node(2g) [place,label=180:{\footnotesize$320|023$}]{}
 (-34,18)node(3g) [place,label=180:{\footnotesize$310|013$}]{}
 (-30,18)node(4g) [place,label=180:{\footnotesize$300|012$}]{}
 (-26,18)node(5g) [place,label=180:{\footnotesize$300|003$}]{}
 (-22,18)node(6g) [place,label=0:{\footnotesize$210|012$}]{}
 (-18,18)node(7g) [place,label=0:{\footnotesize$210|003$}]{}
 (-14,18)node(8g) [place,label=0:{\footnotesize$200|002$}]{}
 (-10,18)node(9g) [place,label=0:{\footnotesize$100|001$}]{}
 (-6,18)node(10g) [place,label=270:{\footnotesize$000|000$}]{}

 (-38,21)node(1h) [place,label=180:{\footnotesize$321|023$}]{}
 (-34,21)node(2h) [place,label=180:{\footnotesize$320|013$}]{}
 (-30,21)node(3h) [place,label=180:{\footnotesize$310|012$}]{}
 (-26,21)node(4h) [place,label=180:{\footnotesize$310|003$}]{}
 (-22,21)node(5h) [place,label=0:{\footnotesize$300|002$}]{}
 (-18,21)node(6h) [place,label=0:{\footnotesize$210|002$}]{}
 (-14,21)node(7h) [place,label=0:{\footnotesize$200|001$}]{}
 (-10,21)node(8h) [place,label=0:{\footnotesize$100|000$}]{}

 (-35,24)node(1i) [place,label=180:{\footnotesize$321|013$}]{}
 (-31,24)node(2i) [place,label=180:{\footnotesize$320|012$}]{}
 (-27,24)node(3i) [place,label=180:{\footnotesize$320|003$}]{}
 (-24,24)node(4i) [place,label=90:{\footnotesize$310|002$}]{}
 (-21,24)node(5i) [place,label=0:{\footnotesize$300|001$}]{}
 (-17,24)node(6i) [place,label=0:{\footnotesize$210|001$}]{}
 (-13,24)node(7i) [place,label=0:{\footnotesize$200|000$}]{}

 (-35,27)node(1j) [place,label=180:{\footnotesize$321|012$}]{}
 (-31,27)node(2j) [place,label=180:{\footnotesize$321|003$}]{}
 (-27,27)node(3j) [place,label=180:{\footnotesize$320|002$}]{}
 (-21,27)node(4j) [place,label=0:{\footnotesize$310|001$}]{}
 (-17,27)node(5j) [place,label=0:{\footnotesize$300|000$}]{}
 (-13,27)node(6j) [place,label=0:{\footnotesize$210|000$}]{}

 (-30,30)node(1k) [place,label=180:{\footnotesize$321|002$}]{}
 (-24,30)node(2k) [place,label=90:{\footnotesize$320|001$}]{}
 (-18,30)node(3k) [place,label=0:{\footnotesize$310|000$}]{}

 (-27,33)node(1l) [place,label=180:{\footnotesize$321|001$}]{}
 (-21,33)node(2l) [place,label=0:{\footnotesize$320|000$}]{}

 (-24,36)node(1m) [place,label=90:{\footnotesize$321|000$}]{};

 \draw (1a)--(1b); 
 \draw (1a)--(2b);

 \draw (1b)--(1c); 
 \draw (1b)--(2c);
 \draw (2b)--(2c);
 \draw (2b)--(3c);

 \draw (1c)--(1d); 
 \draw (1c)--(2d);
 \draw (1c)--(3d);
 \draw (2c)--(3d);
 \draw (2c)--(4d);
 \draw (3c)--(4d);
 \draw (3c)--(5d);
 \draw (3c)--(6d);

 \draw (1d)--(1e); 
 \draw (1d)--(2e);
 \draw (2d)--(1e);
 \draw (2d)--(3e);
 \draw (3d)--(2e);
 \draw (3d)--(3e);
 \draw (3d)--(4e);
 \draw (4d)--(4e);
 \draw (4d)--(5e);
 \draw (4d)--(6e);
 \draw (5d)--(5e);
 \draw (5d)--(7e);
 \draw (6d)--(6e);
 \draw (6d)--(7e);

 \draw (1e)--(1f); 
 \draw (1e)--(2f);
 \draw (2e)--(2f);
 \draw (2e)--(3f);
 \draw (3e)--(2f);
 \draw (3e)--(4f);
 \draw (4e)--(3f);
 \draw (4e)--(4f);
 \draw (4e)--(5f);
 \draw (4e)--(6f);
 \draw (5e)--(5f);
 \draw (5e)--(7f);
 \draw (6e)--(6f);
 \draw (6e)--(7f);
 \draw (7e)--(7f);
 \draw (7e)--(8f);

 \draw (1f)--(1g); 
 \draw (1f)--(2g);
 \draw (2f)--(2g);
 \draw (2f)--(3g);
 \draw (3f)--(3g);
 \draw (3f)--(4g);
 \draw (3f)--(5g);
 \draw (4f)--(3g);
 \draw (4f)--(6g);
 \draw (4f)--(7g);
 \draw (5f)--(4g);
 \draw (5f)--(6g);
 \draw (5f)--(8g);
 \draw (6f)--(5g);
 \draw (6f)--(7g);
 \draw (6f)--(8g);
 \draw (7f)--(8g);
 \draw (7f)--(9g);
 \draw (8f)--(9g);
 \draw (8f)--(10g);

 \draw (1g)--(1h); 
 \draw (2g)--(1h);
 \draw (2g)--(2h);
 \draw (3g)--(2h);
 \draw (3g)--(3h);
 \draw (3g)--(4h);
 \draw (4g)--(3h);
 \draw (4g)--(5h);
 \draw (5g)--(4h);
 \draw (5g)--(5h);
 \draw (6g)--(3h);
 \draw (6g)--(6h);
 \draw (7g)--(4h);
 \draw (7g)--(6h);
 \draw (8g)--(5h);
 \draw (8g)--(6h);
 \draw (8g)--(7h);
 \draw (9g)--(7h);
 \draw (9g)--(8h);
 \draw (10g)--(8h);

 \draw (1h)--(1i); 
 \draw (2h)--(1i);
 \draw (2h)--(2i);
 \draw (2h)--(3i);
 \draw (3h)--(2i);
 \draw (3h)--(4i);
 \draw (4h)--(3i);
 \draw (4h)--(4i);
 \draw (5h)--(4i);
 \draw (5h)--(5i);
 \draw (6h)--(4i);
 \draw (6h)--(6i);
 \draw (7h)--(5i);
 \draw (7h)--(6i);
 \draw (7h)--(7i);
 \draw (8h)--(7i);

 \draw (1i)--(1j); 
 \draw (1i)--(2j);
 \draw (2i)--(1j);
 \draw (2i)--(3j);
 \draw (3i)--(2j);
 \draw (3i)--(3j);
 \draw (4i)--(3j);
 \draw (4i)--(4j);
 \draw (5i)--(4j);
 \draw (5i)--(5j);
 \draw (6i)--(4j);
 \draw (6i)--(6j);
 \draw (7i)--(5j);
 \draw (7i)--(6j);

 \draw (1j)--(1k); 
 \draw (2j)--(1k);
 \draw (3j)--(1k);
 \draw (3j)--(2k);
 \draw (4j)--(2k);
 \draw (4j)--(3k);
 \draw (5j)--(3k);
 \draw (6j)--(3k);

 \draw (1k)--(1l); 
 \draw (2k)--(1l);
 \draw (2k)--(2l);
 \draw (3k)--(2l);

 \draw (1l)--(1m); 
 \draw (2l)--(1m);

 \end{tikzpicture}

\end{center}
\newpage

\section{A Recursive Formula for the Number \\of Elements in $S(n,r)$ of Rank $k.$} \label{Sec5}
In this section we give a recursive formula which counts the number of elements in $S(n,r)$ having fixed rank. At first we show that $S(n,r)$ can be seen as a translate union of two copies of $S(n-1,r)$ if $0 \le r < n$ and of $S(n-1,n-1)$ if $r=n$.

\begin{proposition} \label{prop9}
Let $n \ge 1$ and  $r\in \mathbb{N}$ such that $0 \le r \le n,$. Then there exist two disjoint sublattices $S_1 (n,r)$, $S_2 (n,r)$ of $S(n,r)$ such that $S(n,r)= S_1 (n,r) \cup S_2 (n,r)$, where:
\begin{itemize}
\item[i)] $S_i (n,r)\cong S(n-1, r)$ for $i=1,2,$ if $0 \le r < n$;\\
\item[ii)] $S_i (n,n)\cong S(n-1, n-1)$ for $i=1,2,$ if $r=n$. \\
\end{itemize}
\end{proposition}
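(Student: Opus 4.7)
The plan is to split $S(n,r)$ according to whether a single extremal symbol of the alphabet $A(n,r)$ occurs in the string. For $0\le r<n$ I take $\mu=\overline{n-r}$, the minimum of $A(n,r)$. By the chain condition (\ref{prop4}), if $\mu$ occurs in $w=i_1\cdots i_r|j_1\cdots j_{n-r}$ it must sit in the last coordinate $j_{n-r}$, so I set
$$S_1(n,r)=\{w\in S(n,r):j_{n-r}\neq\overline{n-r}\}\quad\text{and}\quad S_2(n,r)=\{w\in S(n,r):j_{n-r}=\overline{n-r}\}.$$
These are disjoint and their union is $S(n,r)$.

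Next I would produce explicit lattice isomorphisms $\psi_i:S_i(n,r)\to S(n-1,r)$. Define $\psi_2$ by erasing the last symbol $\overline{n-r}$; the residual string uses the alphabet $A(n,r)\setminus\{\overline{n-r}\}=A(n-1,r)$ and still obeys (\ref{prop3})--(\ref{prop4}). For $\psi_1$ I first observe that every $w\in S_1(n,r)$ has $j_1=0^\circ$: its strictly decreasing non-zero negative tail takes values in the $(n-r-1)$-element set $\{\overline{1},\ldots,\overline{n-r-1}\}$, so its length is at most $n-r-1$, forcing at least one leading $0^\circ$ on the negative side. I then define $\psi_1$ by deleting this leading $0^\circ$. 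Each $\psi_i$ has an obvious inverse (append $\overline{n-r}$, respectively prepend a $0^\circ$ in the negative block), and because only a coordinate of constant value is removed while the order on $S(n,r)$ is componentwise, each $\psi_i$ is both order preserving and order reflecting, hence a poset isomorphism.

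The sets $S_1(n,r)$ and $S_2(n,r)$ are closed under the meet $\wedge$ and the join $\vee$ of $S(n,r)$, which act coordinatewise via $\curlywedge$ and $\curlyvee$ on the alphabet. For $S_2(n,r)$ the $(n-r)$-th negative coordinate of both arguments equals $\overline{n-r}$, so it remains $\overline{n-r}$ after $\curlywedge$ and $\curlyvee$; for $S_1(n,r)$, the fact that $\overline{n-r}$ is the unique minimum of $A(n,r)$ prevents $\curlywedge$ or $\curlyvee$ of two symbols different from $\overline{n-r}$ from producing $\overline{n-r}$. Hence $S_1(n,r)$ and $S_2(n,r)$ are sublattices of $S(n,r)$, each isomorphic to $S(n-1,r)$, proving (i).

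The case $r=n$ is handled dually with $\mu=\tilde n$, the maximum of $A(n,n)$. Here $\mu$ must occupy $i_1$, and the split is $i_1(w)=\tilde n$ versus $i_1(w)\neq\tilde n$; $\psi_2$ erases the leading $\tilde n$ while $\psi_1$ erases the trailing $0^\circ$ on the left of the bar, forced because at most $n-1$ non-zero positive symbols are available. The only point requiring some care, and the main obstacle were one to split by a non-extremal symbol, is that the chain condition (\ref{prop4}) transports correctly under insertion and deletion; this works precisely because $\mu$ is extremal in $A(n,r)$, so deleting it leaves a valid strict chain and reinserting it at the extreme position produces a valid element of $S(n,r)$.
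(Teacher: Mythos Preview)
Your argument is correct and, for case~(i), essentially identical to the paper's: the paper also splits on whether the symbol $\overline{n-r}$ appears (equivalently, on the value of $j_{n-r}$), though with the labels $S_1$ and $S_2$ reversed relative to yours---the paper calls $S_1(n,r)$ the set with $j_{n-r}=\overline{n-r}$ and $S_2(n,r)$ the set with $j_1=0^{\circ}$, which matters only for the notation used just after this proposition. For case~(ii) the paper takes a shorter indirect route, invoking the isomorphism $S(n,n)\cong S(n,0)$ of Proposition~\ref{prop8} and then applying case~(i) to $S(n,0)$, whereas you carry out the dual construction by hand; both are equally valid.
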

\begin{proof}
We distinguish two cases: \\
$i)$ $0 \le r < n;$
we denote by $S_1 (n,r)$ the subset of $S(n,r)$ of all the strings of the form
$ w=i_1 \cdots i_r | j_1 \cdots j_{n-1-r} (n-r)$, with $j_1 \cdots j_{n-1-r} \in \{ 0^§, \overline{1}, \cdots, \overline{n-r-1} \}$; moreover, we denote by $S_2 (n,r)$ the subset of $S(n,r)$ of all the strings of the form
$ w=i_1 \cdots i_r |0 j_2 \cdots j_{n-r}$, with $j_2 \cdots j_{n-r} \in \{ 0^§, \overline{1}, \cdots, \overline{n-r-1} \}$.

It is clear that $S(n,r)$ is a disjoint union of $S_1 (n,r)$ and $S_2 (n,r).$ We prove now that $S_i (n,r) \cong S(n-1,r)$ for $i=1,2.$ Let $i=1$ (the case $i=2$ is analogous). It is obvious that there exists a bijective correspondence between $S_1 (n,r)$ and $S(n-1,r).$ Furthermore, if $w,w' \in S_1 (n,r)$ are such that $w=i_1 \cdots i_r | j_1, \cdots j_{n-r-1} (n-r),$ $w^{'}=i_1^{'} \cdots i_r^{'} |j_1^{'} \cdots j_{n-r-1}^{'} (n-r),$ it follows that $w \sqsubseteq w'$ (with respect to the order on $S(n,r)$) if and only if $i_1 \cdots i_r | j_1 \cdots j_{n-r-1} \sqsubseteq i_1^{'} \cdots i_r^{'} | j_1^{'} \cdots j_{n-r-1}^{'}$ (with respect to the order in $S(n-1,r)$).
Hence $S_1 (n,r)$ is isomorphic to $S((n-1),r).$ \\
Finally, since the order on $S(n,r)$ is component by component, it follows that each $S_i(n,r)$ (for $i=1,2$)
is a sublattice of $S(n,r).$\\
$ii)$ $r=n;$ by $i)$, there exist two disjoint sublattices $S_1 (n,0),$ $S_2 (n,0),$ of $S(n,0)$ such that $S(n,0)= S_1 (n,0) \cup S_2 (n,0)$, with $S_i (n,0) \cong S_i (n-1,0)$, for $i=1,2$. By Proposition \ref{prop8}, it follows that $S(n,n) \cong S(n,0)$, therefore there also exist two disjoint sublattices $S_1 (n,n),$ $S_2 (n,n),$ of $S(n,n)$ such that $S(n,n)= S_1 (n,n) \cup S_2 (n,n)$, where $S_i (n,n) \cong S_i (n,0) \cong S(n-1,0) \cong S(n-1,n-1)$, for $i=1,2,$ again by Proposition \ref{prop8}.
\end{proof}

If $n\ge 1,$ the element of minimal rank of the sublattice $S_2(n,r)$ is obviuosly $\hat{w} = 0 \cdots 0 |012 \cdots (n-r-1).$ \\
This element has rank $0$ as element of $(S_2(n,r), \sqsubseteq ),$ but in $S(n,r)$ has rank given by

$\rho (\hat{w})=(1-0)+(2-1)+(3-2)+\cdots +((n-r)-(n-r-1)) = n-r$.

Therefore we can visualize $S_2(n,r)$ (in the Hasse diagram of $S(n,r)$) as an upper-translation of the sublattice $S_1 (n,r),$ of height $(n-r).$ \\
\begin{example}
For example, this is the Hasse diagram of $S(5,3)$ as a translate union of $S_1 (5,3) \cong S(4,3)$ (red lattice) and of
$S_2 (5,3) \cong S(4,3)$ (green lattice).
\begin{center}
\begin{tikzpicture}
 [inner sep=1.0mm,
 placeg/.style={circle,draw=black!100,fill=green!100,thick},
 placer/.style={circle,draw=black!100,fill=red!100,thick},
  scale=0.3]

 \path
 (0,0)node(1a) [placer,label=270:{\footnotesize$000|12$}]{}

 (-4,3)node(1b) [placer,label=180:{\footnotesize$100|12$}]{}
 (4,3)node(2b) [placer,label=0:{\footnotesize$000|02$}]{}

 (-4,6)node(1c) [placer,label=180:{\footnotesize$200|12$}]{}
 (0,6)node(2c) [placer,label=90:{\footnotesize$100|02$}]{}
 (4,6)node(3c) [placeg,label=0:{\footnotesize$000|01$}]{}

 (-8,9)node(1d) [placer,label=180:{\footnotesize$300|12$}]{}
 (-4,9)node(2d) [placer,label=180:{\footnotesize$210|12$}]{}
 (0,9)node(3d) [placer,label=90:{\footnotesize$200|02$}]{}
 (4,9)node(4d) [placeg,label=0:{\footnotesize$100|01$}]{}
 (8,9)node(5d) [placeg,label=0:{\footnotesize$000|00$}]{}

 (-8,12)node(1e) [placer,label=180:{\footnotesize$310|12$}]{}
 (-4,12)node(2e) [placer,label=180:{\footnotesize$300|02$}]{}
 (0,12)node(3e) [placer,label=90:{\footnotesize$210|02$}]{}
 (4,12)node(4e) [placeg,label=0:{\footnotesize$200|01$}]{}
 (8,12)node(5e) [placeg,label=0:{\footnotesize$100|00$}]{}

 (-8,15)node(1f) [placer,label=180:{\footnotesize$320|12$}]{}
 (-4,15)node(2f) [placer,label=180:{\footnotesize$310|02$}]{}
 (0,15)node(3f) [placeg,label=90:{\footnotesize$300|01$}]{}
 (4,15)node(4f) [placeg,label=0:{\footnotesize$210|01$}]{}
 (8,15)node(5f) [placeg,label=0:{\footnotesize$200|00$}]{}

 (-8,18)node(1g) [placer,label=180:{\footnotesize$321|12$}]{}
 (-4,18)node(2g) [placer,label=180:{\footnotesize$320|02$}]{}
 (0,18)node(3g) [placeg,label=90:{\footnotesize$310|01$}]{}
 (4,18)node(4g) [placeg,label=0:{\footnotesize$300|00$}]{}
 (8,18)node(5g) [placeg,label=0:{\footnotesize$210|00$}]{}

  (-4,21)node(1h) [placer,label=180:{\footnotesize$321|02$}]{}
 (0,21)node(2h) [placeg,label=90:{\footnotesize$320|01$}]{}
 (4,21)node(3h) [placeg,label=0:{\footnotesize$310|00$}]{}

 (-4,24)node(1i) [placeg,label=180:{\footnotesize$321|01$}]{}
 (4,24)node(2i) [placeg,label=0:{\footnotesize$320|00$}]{}

 (0,27)node(1j) [placeg,label=90:{\footnotesize$321|00$}]{};

 \draw[red] (1a)--(1b); 
 \draw[red] (1a)--(2b);

 \draw[red] (1b)--(1c); 
 \draw[red] (1b)--(2c);
 \draw[red] (2b)--(2c);
 \draw (2b)--(3c);

 \draw[red] (1c)--(1d); 
 \draw[red] (1c)--(2d);
 \draw[red] (1c)--(3d);
 \draw[red] (2c)--(3d);
 \draw (2c)--(4d);
 \draw[green] (3c)--(4d);
 \draw[green] (3c)--(5d);

 \draw[red] (1d)--(1e); 
 \draw[red] (1d)--(2e);
 \draw[red] (2d)--(1e);
 \draw[red] (2d)--(3e);
 \draw[red] (3d)--(2e);
 \draw[red] (3d)--(3e);
 \draw (3d)--(4e);
 \draw[green] (4d)--(4e);
 \draw[green] (4d)--(5e);
 \draw[green] (5d)--(5e);

 \draw[red] (1e)--(1f); 
 \draw[red] (1e)--(2f);
 \draw[red] (2e)--(2f);
 \draw (2e)--(3f);
 \draw[red] (3e)--(2f);
 \draw (3e)--(4f);
 \draw[green] (4e)--(3f);
 \draw[green] (4e)--(4f);
 \draw[green] (4e)--(5f);
 \draw[green] (5e)--(5f);

 \draw[red] (1f)--(1g); 
 \draw[red] (1f)--(2g);
 \draw[red] (2f)--(2g);
 \draw (2f)--(3g);
 \draw[green] (3f)--(3g);
 \draw[green] (3f)--(4g);
 \draw[green] (4f)--(3g);
 \draw[green] (4f)--(5g);
 \draw[green] (5f)--(4g);
 \draw[green] (5f)--(5g);

 \draw[red] (1g)--(1h); 
 \draw[red] (2g)--(1h);
 \draw (2g)--(2h);
 \draw[green] (3g)--(2h);
 \draw[green] (3g)--(3h);
 \draw[green] (4g)--(3h);
 \draw[green] (5g)--(3h);

 \draw (1h)--(1i); 
 \draw[green] (2h)--(1i);
 \draw[green] (2h)--(2i);
 \draw[green] (3h)--(2i);

 \draw[green] (1i)--(1j); 
 \draw[green] (2i)--(1j);

 \end{tikzpicture}

\end{center}

\end{example}
Given the lattice $S(n,r),$ for each $k$ such that $0 \le k \le R(n,r),$ we denote with $s(n,r,k)$ the number of elements of $S (n,r)$ with rank $k.$ It holds the following ricorsive formula for $s(n,r,k):$
\begin{proposition} \label{prop10}
Let $n \ge 1.$ If $r\in \mathbb{N}$ is such that $0 \le r < n,$ then
$$
s(n,r,k)= \left\{ \begin{array}{lll}
                  s(n-1,r,k) & \textrm{if} & 0 \le k < (n-r) \\
                  s(n-1,r,k) + s(n-1,r, k-(n-r)) & \textrm{if} & (n-r) \le k \le R(n-1,r) \\
                  s(n-1,r,k-(n-r)) & \textrm{if} & R(n-1,r) < k \le R(n,r)
                  \end{array} \right.
$$
If $r=n$, then $s(n,n,k)=s(n,0,k)$.
\end{proposition}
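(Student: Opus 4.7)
My approach combines the set-theoretic decomposition in Proposition \ref{prop9} with a direct rank calculation comparing the rank of elements in each piece, viewed inside $S(n,r)$, with their rank after the natural isomorphism to $S(n-1,r)$. The case $r = n$ is immediate from Proposition \ref{prop8}: the lattice isomorphism $S(n,n) \cong S(n,0)$, together with the fact that the rank function of a finite graded lattice is determined by its order structure (uniqueness in Proposition \ref{cover2}), forces any such isomorphism to be rank-preserving, giving $s(n,n,k) = s(n,0,k)$.

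For $0 \le r < n$ I would apply Proposition \ref{prop9} to write $S(n,r) = S_1(n,r) \cup S_2(n,r)$ as a disjoint union, with each piece lattice-isomorphic to $S(n-1,r)$. The isomorphisms are natural truncations: $\psi_1 : S_1(n,r) \to S(n-1,r)$ erases the forced symbol $\overline{n-r}$ at the last position, while $\psi_2 : S_2(n,r) \to S(n-1,r)$ erases the forced $0^\circ$ in the first position right of the bar. The central computation, using the explicit formula $\rho(w) = \sum_{k=1}^{r} i_k + \sum_{k=1}^{n-r}(k - j_k)$ from Section \ref{Sec2}, shows that for $w \in S_1(n,r)$ the dropped summand $(n-r)-(n-r)$ vanishes, so $\rho_{S(n,r)}(w) = \rho_{S(n-1,r)}(\psi_1(w))$; for $w \in S_2(n,r)$, deleting the contribution $1-j_1 = 1-0 = 1$ and reindexing $k \mapsto k-1$ on the remaining summands yields the extra constant $n-r$, so $\rho_{S(n,r)}(w) = \rho_{S(n-1,r)}(\psi_2(w)) + (n-r)$.

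Counting elements of each rank, $S_1(n,r)$ contributes $s(n-1,r,k)$ elements of $S(n,r)$-rank $k$ and $S_2(n,r)$ contributes $s(n-1,r,k-(n-r))$, under the convention $s(n-1,r,t)=0$ for $t<0$ or $t>R(n-1,r)$. Summing gives $s(n,r,k) = s(n-1,r,k) + s(n-1,r,k-(n-r))$, and the three cases in the statement correspond exactly to the ranges where one, both, or only the other summand is nonzero. Coverage of the full interval $\{0,1,\ldots,R(n,r)\}$ uses the identity $R(n,r) - R(n-1,r) = \binom{n-r+1}{2} - \binom{n-r}{2} = n-r$, which is immediate.

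The main obstacle I anticipate is the rank shift $\rho_{S(n,r)} = \rho_{S(n-1,r)} \circ \psi_2 + (n-r)$: this cannot be extracted from the lattice isomorphism alone (which only preserves the internal rank of $S_2(n,r)$ as an abstract lattice, not its rank inside $S(n,r)$), so one must descend to the explicit formula for $\rho$ and track the index shift carefully. Everything else reduces to routine bookkeeping.
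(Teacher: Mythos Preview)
Your proposal is correct and follows essentially the same approach as the paper: both arguments use the disjoint decomposition $S(n,r)=S_1(n,r)\cup S_2(n,r)$ of Proposition~\ref{prop9}, the rank shift of $n-r$ on the second piece (the paper records this just before Proposition~\ref{prop10} by computing $\rho(\hat w)=n-r$ for the minimum of $S_2(n,r)$, whereas you verify it elementwise via the explicit formula for $\rho$), and Proposition~\ref{prop8} for the case $r=n$. Your explicit verification of the rank shift for every $w\in S_2(n,r)$ is slightly more detailed than the paper's treatment, but the underlying idea is identical.
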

\begin{proof}
$\bf{\textrm{Case} \,\,\, 1)}$ Let $k$ be such that $0 \le k < (n-r).$ By what we have asserted before, the element $\hat{w}$ (i.e. the minimum of $S_2 (n,r)$) has rank $(n-r)$ in $S(n,r),$ hence, by Proposition \ref{prop9}, it follows that $s(n,r,k)$ coincides with the number of elements of rank $k$ in $S_1 (n,r)$ and since $S_1 (n,r) \cong S(n-1,r),$ it follows that $s(n,r,k)=s(n-1,r,k).$\\
$\bf{\textrm{Case} \,\,\, 2)}$ Let $k$ be such that $(n-r) \le k \le R(n-1,r).$ In this case, the number of elements of rank $k$ in $S(n,r)$ coincides with the sum of the number of elements of rank $k$ in $S_1 (n,r)$ and of the number of elements of rank $[k-(n-r)]$ in $S_2 (n,r).$ Since $S_1 (n,r) \cong S_2 (n,r) \cong S(n-1,r),$ it follows that $s(n,r,k)=s(n-1,r,k)+s(n-1,r,k-(n-r)).$\\
$\bf{\textrm{Case} \,\,\, 3)}$ Let $k$ be such that $R(n-1,r) < k \le R(n,r).$ In this case $s(n,r,k)$ coincides with the number of elements of rank $[k-(n-r)]$ in $S_2 (n,r),$ and since $S_2 (n,r) \cong S(n-1,r)$ it follows that $s(n,r,k)=s(n-1,r,k-(n-r)).$

Finally, if $r=n$, the last equality follows from the isomorphism $S(n,n) \cong S(n,0)$.
\end{proof}

It is clear that we would prefer a closed formula for the numbers $s(n,r,k)$, however at present the previous recursive formula is the best possible result that we have.
By the recursive formula stated in Proposition \ref{prop10}, the first values of $s(n,r,k)$ are given by:\\
$s(0,0,0)=1,$\\
$s(1,0,0)=s(0,0,0)=1,$ $\,\,\,\,\,$ $s(1,0,1)=s(0,0,0)=1,$ \\
$s(1,1,0)=s(1,0,0)=1,$ $\,\,\,\,\,$ $s(1,1,1)=s(1,0,1)=1,$ \\
$s(2,0,0)=1,$ $s(2,0,1)=s(1,0,1)=1,$ $s(2,0,2)=s(1,0,0)=1,$ \\
$s(2,0,3)=s(1,0,1)=1,$ \\
$s(2,1,0)=s(1,1,0)=1,$ $s(2,1,1)=s(1,1,1)+s(1,1,0)=2,$ $s(2,1,2)=s(1,1,1)=1,$ \\
$s(2,2,0)=s(2,0,0)=1,$ $s(2,2,1)=s(2,0,1)=1,$ $s(2,2,2)=s(2,0,2)=1,$ \\
$s(2,2,3)=s(2,0,3)=1.$ \\

If $P$ is graded poset of rank $m$ and has $p_{i}$ elements of rank $i$, for each $0\leq i\leq m$ , then the polynomial $F\left(P,t\right)=\sum_{i=1}^{m} p_{i}t^{i}$ is called the $rank$-$generating function$ of P.
If $P$ and $Q$ are two graded posets respectively with rank-generating functions $F\left(P,t\right)$ and $F\left(Q,t\right)$, then $P\times Q$ is also graded and $F\left(P\times Q,t\right)=F\left(P,t\right)\cdot F\left(Q,t\right)$ (see \cite{stanley-vol1}).\\
This leads to the following Cauchy-type formula for $s(n,r,k)$.
\

\begin{proposition}\label{s(n,r,k)Cauchy}
If$\,\,$ $0\leq r \leq n$ and $0\leq k\leq R(n,r)$ then $s(n,r,k)=\sum_{i=0}^{k}s(r,r,i)\cdot s(n-r,n-r,k-i)$.
\end{proposition}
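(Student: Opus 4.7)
The plan is to reduce the formula to the well-known multiplicative behaviour of rank-generating functions under cartesian products, combined with two structural isomorphisms already proved in the excerpt.

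First I would recall Proposition \ref{CartesianIsomorphism}, which gives $S(n,r) \cong S(r,r) \times S(n-r,0)$, and then Proposition \ref{prop8}, which gives $S(n-r,0) \cong S(n-r,n-r)$. Composing these two lattice isomorphisms yields
\[
S(n,r) \;\cong\; S(r,r) \times S(n-r,n-r).
\]
Since all the posets involved are graded and the isomorphisms in question preserve the order relation (hence cover relations and the minimum element), they are automatically rank-preserving. Therefore the rank-generating functions agree on both sides.

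Next I would invoke the multiplicativity of the rank-generating function over a cartesian product of graded posets (the fact cited from \cite{stanley-vol1} just before the statement), which gives
\[
F(S(n,r),t) \;=\; F(S(r,r),t)\cdot F(S(n-r,n-r),t).
\]
Writing each factor as $F(S(r,r),t)=\sum_{i}s(r,r,i)\, t^{i}$ and $F(S(n-r,n-r),t)=\sum_{j}s(n-r,n-r,j)\, t^{j}$, the coefficient of $t^{k}$ on the right-hand side is exactly $\sum_{i=0}^{k} s(r,r,i)\, s(n-r,n-r,k-i)$, while the coefficient of $t^{k}$ on the left-hand side is $s(n,r,k)$. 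Comparing coefficients gives the claimed identity.

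The argument is essentially a one-line consequence of the two isomorphisms and the product formula for rank-generating functions, so there is no genuine obstacle; the only point that needs a moment of care is the verification that the cartesian decomposition $S(n,r) \cong S(r,r)\times S(n-r,n-r)$ respects ranks, but this is immediate from the explicit form of the isomorphism $\varphi$ in the proof of Proposition \ref{CartesianIsomorphism} (which concatenates the ``positive'' and ``negative'' halves of a string, so $\rho$ splits additively) together with the fact that the isomorphism of Proposition \ref{prop8} is a lattice isomorphism between graded lattices of the same rank.
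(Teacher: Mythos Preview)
Your proof is correct and follows essentially the same approach as the paper: both use Propositions \ref{CartesianIsomorphism} and \ref{prop8} to obtain $S(n,r)\cong S(r,r)\times S(n-r,n-r)$, then apply the multiplicativity of the rank-generating function and compare coefficients of $t^{k}$. Your remark that the isomorphisms are rank-preserving is a welcome bit of extra care that the paper leaves implicit.
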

\begin{proof}
The rank-generating function of $S(n,r)$ is $F\left(S(n,r),t\right)=\sum_{k=0}^{R(n,r)}s(n,r,k)t^{k}$.

By Propositions \ref{CartesianIsomorphism} and \ref{prop8} it follows that

$S(n,r) \cong S(r,r) \times S(n-r,0)\cong S(r,r) \times S(n-r,n-r).$

Hence

 $F\left(S(n,r),t\right)=F\left(S(r,r)\times S(n-r,n-r),t\right)=F\left(S(r,r),t\right)\cdot F\left(S(n-r,n-r),t\right).$

Then

 $F\left(S(n,r),t\right)=\left(\sum_{l=0}^{R(r,r)}s(r,r,l)t^{l}\right)\cdot\left(\sum_{j=0}^{R(n-r,n-r)}s(n-r,n-r,j) t^{j}\right)\\=\sum_{k=0}^{R(r,r)+R(n-r,n-r)}\sum_{i=0}^{k}s(r,r,i)s(r,r,k-i)t^{k}\\
=\sum_{k=0}^{R(n,r)}\sum_{i=0}^{k}s(r,r,i)s(n-r,n-r,k-i)t^{k},$

hence the thesis follows.
\end {proof}

The last result of this section shows a symmetric property of $S(n,r)$.

\begin{proposition}\label{s(n,r,k)-symm}
If $0\leq r \leq n$ and $k=R(n,r)$, then $s(n,r,i)=s(n,r,k-i)$ for $0 \le i \le k$.
\end{proposition}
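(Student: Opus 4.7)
The plan is to exploit the complementation map $c : S(n,r) \to S(n,r)$, $w \mapsto w^c$, which has already been established to be a well-behaved involution on the lattice. The key fact is that it precisely pairs elements of rank $i$ with elements of rank $k-i$, so it restricts to a bijection $S_i(n,r) \to S_{k-i}(n,r)$, immediately yielding the claimed equality of cardinalities.

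Concretely, I would proceed as follows. First, recall from Remark \ref{InvolutionPropertyS(n,r)} that $(w^c)^c = w$ for every $w \in S(n,r)$, so the map $c$ is a self-inverse bijection of $S(n,r)$. Next, invoke Proposition \ref{symmetryCompl}, which states that $\rho(w) + \rho(w^c) = R(n,r) = k$ for every $w \in S(n,r)$. Thus $\rho(w) = i$ if and only if $\rho(w^c) = k-i$, so $c$ restricts to a bijection between the fiber of $\rho$ over $i$ and the fiber over $k-i$. Taking cardinalities gives $s(n,r,i) = s(n,r,k-i)$ for every $0 \le i \le k$.

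There is essentially no obstacle here: the two facts needed (involutivity of $c$ and the rank-complementarity identity) are already proved earlier in the paper. One could alternatively derive the statement from Proposition \ref{s(n,r,k)Cauchy} by showing separately that the rank-generating functions $F(S(r,r),t)$ and $F(S(n-r,n-r),t)$ are palindromic (which reduces to the same involution argument on the factor lattices via Proposition \ref{CartesianIsomorphism}), but the direct involution proof is shorter and more transparent. The entire argument fits in one or two lines.
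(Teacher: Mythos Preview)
Your proposal is correct and essentially identical to the paper's own proof: the paper also defines $f:S_i(n,r)\to S_{k-i}(n,r)$ by $f(w)=w^c$, uses Proposition~\ref{symmetryCompl} to see it is well defined, and uses $(w^c)^c=w$ to conclude it is a bijection.
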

\begin{proof}
We recall that $S_l(n,r)$ is the set of elements of $S(n,r)$ with rank $l$, for each $0\leq l\leq k$.
It is enough to consider the map $f:S_i(n,r)\rightarrow S_{k-i}(n,r)$ defined by $f(w)=w^{c}$.\\At first we observe that $f$ is well-defined, because if $w\in S_i(n,r)$ then $\rho(w)=i$ and, by Proposition \ref{symmetryCompl}, $\rho(w^{c})=k-i$, therefore $w^{c}\in S_{k-i}(n,r)$. The map
$f$ is injective because by $(w^{c})^{c}=w$ it follows that $w_{1}^{c}=w_{2}^{c}\ \Rightarrow w_{1}=w_{2}$. To show that $f$ is also onto, we take $v\in S_{k-i}(n,r)$ and $w=v^{c}$. Since $\rho(v)=k-i$, by Proposition \ref{symmetryCompl} we have that $k=\rho(v^{c})+\rho(v)=\rho(w)+\rho(v)=\rho(w)+(k-i) $, hence $ \rho(v^{c})=i$, i.e. $ w \in S_i(n,r)$ and $f(w)=w^{c}=(v^{c})^{c}=v$, so $f$ is onto and hence $f$ is bijective.
\end{proof}

\newpage

\section{Relation between Weight Functions, the Lattices $S(n,r)$ \\ and $S(n,d,r)$ and the Numbers $\gamma(n,r)$ and $\gamma(n,d,r).$} \label{Sec6}

\begin{definition}
A $(n,r)$-function is an application $f:A(n,r) \to \mathbb{R}$ which is increasing and such that $f(0^§)=0,$ i.e.:
\begin{equation}\label{equ1}
f(\tilde{r}) \ge \cdots \ge f(\tilde{1}) \ge f(0^§)=0 > f(\overline{1}) \ge \cdots \ge f(\overline{n-r}).
\end{equation}
We call $F(n,r)$ the set of the $(n,r)$-functions.
\end{definition}
\begin{definition}
The function $f$ is a $(n,r)$-weight function if (\ref{equ1}) holds and if:
\begin{equation}\label{eq2}
f(\tilde{1}) + \cdots + f(\tilde{r}) + f(\overline{1}) + \cdots + f(\overline{n-r}) \ge 0.
\end{equation}
We call $WF(n,r)$ the set of the $(n,r)$-weight functions.
\end{definition}
\begin{definition}
If $f$ is a $(n,r)$-function, we define the {\it sum function} induced by $f$ on $S(n,r)$
$$\Sigma_f : S(n,r) \to \mathbb{R}$$
the function that associates to $w \in S(n,r),$
$w=i_1 \cdots i_r \,\,\, |\,\,\, j_1 \cdots j_{n-r},$
the real number $\Sigma_f (w)=f(i_1)+ \cdots + f(i_r) + f(j_1) + \cdots + f(j_{n-r}).$
\end{definition}
\begin{proposition} \label{propa}
If $f$ is a $(n,r)$-function and if $w,w' \in S(n,r)$ are such that $w \sqsubseteq w',$ then $\Sigma_f (w) \le \Sigma_f (w').$
\end{proposition}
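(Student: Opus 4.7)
The plan is to reduce the inequality $\Sigma_f(w) \le \Sigma_f(w')$ to $n$ pointwise inequalities of the form $f(t) \le f(t')$ with $t \preceq t'$ in the alphabet $A(n,r)$, each of which is immediate from the fact that $f$ is increasing on $A(n,r)$ (condition \eqref{equ1}).

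First I would unpack the hypothesis $w \sqsubseteq w'$. Writing $w = i_1 \cdots i_r \mid j_1 \cdots j_{n-r}$ and $w' = i'_1 \cdots i'_r \mid j'_1 \cdots j'_{n-r}$, the order on $S(n,r)$ was defined as the restriction of the componentwise order on $\mathcal{C}(n,r) = A(n,r)^n$. Hence $w \sqsubseteq w'$ is equivalent to the $n$ inequalities
\[
i_1 \preceq i'_1, \,\, \ldots, \,\, i_r \preceq i'_r, \quad j_1 \preceq j'_1, \,\, \ldots, \,\, j_{n-r} \preceq j'_{n-r}
\]
in $A(n,r)$.

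Next I would apply the monotonicity of $f$ componentwise. Because $f$ is an $(n,r)$-function, the chain of inequalities \eqref{equ1} tells us that $a \preceq b$ in $A(n,r)$ implies $f(a) \le f(b)$. Applied to each of the $n$ componentwise comparisons above, this yields $f(i_k) \le f(i'_k)$ for $k=1,\ldots,r$ and $f(j_\ell) \le f(j'_\ell)$ for $\ell = 1,\ldots,n-r$. Summing these $n$ real-valued inequalities and using the definition of $\Sigma_f$ gives
\[
\Sigma_f(w) = \sum_{k=1}^{r} f(i_k) + \sum_{\ell=1}^{n-r} f(j_\ell) \le \sum_{k=1}^{r} f(i'_k) + \sum_{\ell=1}^{n-r} f(j'_\ell) = \Sigma_f(w'),
\]
which is the desired conclusion.

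There is no real obstacle here: the statement is essentially a tautology once one observes that $\Sigma_f$ is a sum of evaluations of a monotone function at each coordinate and that $\sqsubseteq$ is the coordinatewise order. One could alternatively invoke the graded structure (Proposition \ref{cover} and the rank function $\rho$) to reduce to the covering case $w \models w'$, where $w$ and $w'$ differ in exactly one coordinate, but this is a needless detour since the componentwise description of $\sqsubseteq$ handles all pairs at once. Note also that the hypothesis \eqref{eq2} (the weight condition) plays no role in this proposition; only the monotonicity of $f$ is used.
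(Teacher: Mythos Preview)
Your proof is correct and follows exactly the same approach as the paper's own proof: unpack $w \sqsubseteq w'$ into the componentwise inequalities $i_k \preceq i'_k$ and $j_\ell \preceq j'_\ell$, apply the monotonicity of $f$ to each, and sum. The paper's version is simply a more compressed statement of the same argument.
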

\begin{proof}
If $w=i_1 \cdots i_r \,\,\, | \,\,\, j_1 \cdots j_{n-r} \sqsubseteq w'=i_1^{'} \cdots i_r^{'} | j_1^{'} \cdots j_{n-r}^{'},$
then we have that $i_1 \preceq i_1^{'},$ $\cdots,$ $i_r \preceq i_r^{'},$ $j_1 \preceq j_1^{'},$ $\cdots$ $j_{n-r} \preceq j_{n-r}^{'};$ hence, since $f$ is increasing on $A(n,r),$ the assertion follows immediately by definition of the sum function $\Sigma_f.$
\end{proof}
\begin{proposition} \label{propb}
If $f$ is a $(n,r)$-weight function and if $w \in S(n,r)$ is such that $\Sigma_f (w)<0,$ then $\Sigma_f (w^c) >0.$
\end{proposition}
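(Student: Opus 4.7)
The plan is to exploit the fact that the $*$-map and complement operation on $S(n,r)$ correspond precisely to taking subsets and set-theoretic complements in $A(n,r)\setminus\{0^\circ\}$. Because $f(0^\circ)=0$ by definition of an $(n,r)$-function, the zero entries in any string $w\in S(n,r)$ contribute nothing to $\Sigma_f(w)$, so $\Sigma_f(w)$ really only depends on $w^*$.

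First I would rewrite $\Sigma_f(w)=\sum_{x\in w^*}f(x)$ and similarly $\Sigma_f(w^c)=\sum_{x\in (w^*)^\pi}f(x)$, using that $w^c=\overline{(w^*)^\pi}$ by definition and that the $0^\circ$-entries drop out. Next, since $w^*$ and $(w^*)^\pi$ partition $A(n,r)\setminus\{0^\circ\}$ (disjoint union, by the very definition of $(w^*)^\pi$ as the complement in $A(n,r)\setminus\{0^\circ\}$), adding gives
\[
\Sigma_f(w)+\Sigma_f(w^c)=\sum_{x\in A(n,r)\setminus\{0^\circ\}}f(x)=f(\tilde 1)+\cdots+f(\tilde r)+f(\overline{1})+\cdots+f(\overline{n-r}).
\]
By the weight-function hypothesis \eqref{eq2}, this total is $\ge 0$.

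Combining, $\Sigma_f(w^c)\ge -\Sigma_f(w)$, and since by assumption $\Sigma_f(w)<0$, we conclude $\Sigma_f(w^c)>0$, as required. There is no real obstacle here; the only point to verify carefully is the bookkeeping that $w^*\sqcup (w^*)^\pi = A(n,r)\setminus\{0^\circ\}$ as a \emph{disjoint} union, which is immediate from the definition of $(\cdot)^\pi$ given just before Remark \ref{InvolutionPropertyS(n,r)}.
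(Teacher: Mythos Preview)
Your argument is correct and is essentially the same as the paper's: both establish $\Sigma_f(w)+\Sigma_f(w^c)=\sum_{x\in A(n,r)\setminus\{0^\circ\}}f(x)\ge 0$ and then conclude. The paper phrases this identity via the operations $\sqcup$ and $\sqcap$ (noting $w\sqcup w^c=r\cdots 1|1\cdots(n-r)$ and $w\sqcap w^c=0\cdots 0|0\cdots 0$), but unwinding those definitions gives exactly your disjoint-partition computation on $w^*$ and $(w^*)^\pi$.
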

\begin{proof}
By definition of the two binary operations $\sqcap,$ $\sqcup$ and of the complement operation $^{c}$ on $S(n,r),$
we have that
$$ w \sqcup w^{c}=r \cdots 1| 1 \cdots (n-r)$$
and
$$ w \sqcap w^{c}=0 0 \cdots 0 | 0 \cdots 0.$$
Hence, by definition of $\Sigma_f$ and since $f(0^§)=0,$ we have that
$$ \Sigma_f (w) + \Sigma_f (w^c) =\Sigma_f (w \sqcup w^{c})=f(\tilde{1})+ \cdots + f(\tilde{r}) + f(\overline{1}) + \cdots + f(\overline{n-r}) \ge 0,$$
by (\ref{eq2}). Hence, if $\Sigma_f (w) < 0,$ we will have that $\Sigma_f (w^{c}) >0.$
\end{proof}
If $f$ is a $(n,r)$-function, we set:
$$S^+_f (n,r) = \{ w \in S(n,r) : \Sigma_f (w) \ge 0 \} ;$$
furthermore, if $d$ and $r$ are integers such that $1 \le d,r \le n,$ we set:
$$S^+_f (n,d,r) = \{ w \in S(n,d,r) : \Sigma_f (w) \ge 0 \}.$$
Observe that, in general, neither $S^{+} _f (n,r)$ nor $S^{+} _f (n,d,r)$ are sublattices of $S(n,r),$ because they are not closed with respect to the operation of $\inf$ ($\wedge$). They are simply sub-posets of $S(n,r)$ with the induced order.\\
We set
\begin{itemize}
\item[$(\beta) $] $\gamma (n,r) = \min \{ |S^+_f (n,r)|$ : $f$ is a $(n,r)$-weight function $\}$; \\
\item[$(\delta)$] $\gamma(n,d,r)= \min \{ | S^+_f (n,d,r)|$ : $f$ is a $(n,r)$-weight function $\}$. \\
\end{itemize}
It is easy to observe that the numbers defined in $(\beta)$ are exactly those in (\ref{eq0}) of the introduction, while the numbers defined in $(\delta)$ are the same of those in (\ref{eq1}) in the introduction. We use therefore both the notations.

The Theorem 1 of \cite{ManMik87} applied to our context, gives the following
\begin{proposition}
For each $r \in \mathbb{N}$ with $1 \le r \le n$, we have that:
\begin{itemize}
\item[i)]  $\gamma(n,r) \ge 2^{n-1}+1$, \\
\item[ii)] $\gamma (n,1) \le 2^{n-1} +1.$
\end{itemize}
\end{proposition}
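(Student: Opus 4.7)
The plan is to reduce both inequalities to the classical Manickam--Mikl\"os theorem (Theorem 1 of \cite{ManMik87}) by way of the bijection $* : S(n,r) \to \mathcal{P}(A(n,r)\setminus\{0^\circ\})$. The key observation, immediate from $f(0^\circ)=0$, is that for every $w\in S(n,r)$ one has
\[
\Sigma_f(w)=\sum_{a\in w^*} f(a).
\]
Thus $|S^+_f(n,r)|$ equals the number of subsets of the $n$-element set $A(n,r)\setminus\{0^\circ\}$ whose $f$-sum is non-negative, and the constraint (\ref{eq2}) says precisely that the total of the $n$ real numbers $f(\tilde 1),\dots,f(\tilde r),f(\overline 1),\dots,f(\overline{n-r})$ is $\ge 0$. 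Everything now reduces to counting non-negative partial sums of $n$ reals whose total is non-negative.

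For (i), first note that the string $0\cdots 0\,|\,0\cdots 0$ corresponds under $*$ to the empty subset and has $\Sigma_f$-value $0$, so it always lies in $S^+_f(n,r)$. Applying Theorem 1 of \cite{ManMik87} to the $n$ values above gives at least $2^{n-1}$ non-empty subsets with non-negative $f$-sum, i.e.\ at least $2^{n-1}$ further elements of $S^+_f(n,r)$. Hence $|S^+_f(n,r)|\ge 2^{n-1}+1$ for every $(n,r)$-weight function $f$, which is the claimed lower bound for $\gamma(n,r)$.

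For (ii), the strategy is to exhibit an extremal witness. I would take $f\in WF(n,1)$ defined by $f(\tilde 1)=n-1$ and $f(\overline k)=-1$ for $k=1,\dots,n-1$. The chain condition (\ref{equ1}) is immediate, and the sum is $(n-1)-(n-1)=0$, so (\ref{eq2}) holds. Now any $Y\subseteq A(n,1)\setminus\{0^\circ\}$ containing $\tilde 1$ satisfies $\sum_{a\in Y}f(a)\ge (n-1)-(n-1)=0$, and there are exactly $2^{n-1}$ such subsets; any non-empty $Y$ not containing $\tilde 1$ has strictly negative sum; and the empty subset contributes $1$. This gives $|S^+_f(n,1)|=2^{n-1}+1$, so $\gamma(n,1)\le 2^{n-1}+1$.

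The entire argument is essentially a dictionary translation: the only nontrivial input is Theorem 1 of \cite{ManMik87}, and I do not foresee any genuine obstacle beyond being careful that the empty subset (which is counted in $S^+_f(n,r)$ but excluded from the count $A(n)=2^{n-1}$ of \cite{ManMik87}) is the source of the additive $+1$.
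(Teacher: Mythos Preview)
Your argument is correct and is precisely the approach the paper itself takes: the paper does not give a self-contained proof but simply invokes Theorem~1 of \cite{ManMik87} and remarks that the extra $+1$ comes from admitting the string $0\cdots 0\,|\,0\cdots 0$ (the empty subset) in $S^+_f(n,r)$. Your write-up is a faithful and more explicit unpacking of exactly this, including the standard extremal witness $f(\tilde 1)=n-1$, $f(\overline k)=-1$ for part~(ii).
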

The difference between our situation and the Theorem 1 of \cite{ManMik87} is that we admit the string $0 \cdots 0 | 0 \cdots 0$ in the set $S^+_f (n,r),$ i.e. we admit the empty set. For this reason in $i)$ and $ii)$ the number $(2^{n-1} +1)$ appears instead of $2^{n-1}$ of \cite{ManMik87}.\\
In the next section, we will link the numbers $\gamma(n,r)$ and $\gamma(n,d,r)$ to a minimum problem on a family of boolean functions defined on the lattices $S(n,r)$ and $S(n,d,r)$. 

\section{Weight Functions and Boolean Functions on $S(n,r).$} \label{Sec7}
In this section we show how to associate to any $(n,r)$-function and to any $(n,r)$-weight function a Boolean function on $S (n,r).$ Our aim is to connect the study of the $(n,r)$-weight functions and of the related extremal problems (in particular the computation of $\gamma (n,r)$ and $\gamma (n,d,r)$) to some boolean functions on $S(n,r).$ \\
If $f$ is a $(n,r)$-function (or a $(n,r)$-weight function), we can define the map
$$
A_f : S(n,r) \to \bf{2}
$$
setting
$$
A_f (w) = \left\{ \begin{array}{lll}
                  P & \textrm{if} & \Sigma_f (w) \ge 0 \\
                  N & \textrm{if} & \Sigma_f (w) < 0
                  \end{array} \right.
$$
In order to underline the essential properties of the map $A_f,$ we introduce the concept of $(n,r)$-boolean  map.\\

\begin{definition}
A $(n,r)$-boolean  map (briefly $(n,r)$-BM) is a map $A : S(n,r) \to \bf{2}$  with the following properties:
\begin{itemize}
\item[$a_1)$] if $w_1,w_2 \in S(n,r)$ and $w_1 \sqsubseteq w_2$, with $A (w_1)=P$, then $A (w_2)=P$;\\
\item[$a_2)$] if $w_1, w_2 \in S(n,r)$ and $w_1 \sqsubseteq w_2$, with  $A (w_2)=N$, then  $A (w_1)=N;$ \\
\item[$a_3)$]  $A(0 \cdots 0|0 \cdots 0)=P$ and $A(0 \cdots 0|0 \cdots 0 1)=N.$
\end{itemize}
\end{definition}
\begin{definition}
A $(n,r)$-{\it weighted boolean  map} (briefly $(n,r)$-WBM) is a $(n,r)$-BM $A : S(n,r) \to \bf{2}$ which satisfies the following two properties:
\medskip
\begin{itemize}
\item[$a_4)$] if $w \in S(n,r)$ is such that $A (w)=N,$ then  $A (w^{c}) =P;$ \\
\item[$a_5)$]  $A (r (r-1) \cdots 2 1 | 1 2 \cdots (n-r))=P.$
\end{itemize}
\end{definition}

We denote by $B(n,r)$ the family of all the $(n,r)$-BM's and by $WB(n,r)$ the family of all the $(n,r)$-WBM's.

\begin{proposition}\label{propd}
\begin{itemize}
\item[i)] If $A:S(n,r) \to \bf{2}$ then:
$$A\,\,\, \textrm{satisfies} \,\,\, a_1) \Longleftrightarrow A \,\,\, \textrm{satisfies} \,\,\, a_2) \Longleftrightarrow A \,\,\, \textrm{is order-preserving}.$$
\item[ii)] if $f$ is a $(n,r)$-function, then $A_f$ is a $(n,r)$-BM. \\
\item[iii)] if $f$ is a $(n,r)$-weight function, then $A_f$ is a $(n,r)$-WBM.\\
\end{itemize}
\end{proposition}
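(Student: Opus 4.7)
For part (i), the plan is to equip $\mathbf{2} = \{P,N\}$ with the total order $N < P$ and observe that order-preservation forbids exactly one configuration, namely $w_1 \sqsubseteq w_2$ with $A(w_1) = P$ and $A(w_2) = N$. First I would show $a_1) \Leftrightarrow a_2)$ by noting that the two implications are literally contrapositives of one another: forbidding the jump from $P$ down to $N$ as we move up the order can be phrased either by ``$P$ must propagate upward'' or by ``$N$ must propagate downward''. Then, under our chosen order on $\mathbf{2}$, each of $a_1)$ and $a_2)$ is a verbatim restatement of ``$w_1 \sqsubseteq w_2 \Rightarrow A(w_1) \le A(w_2)$'', which closes the three-way equivalence.

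For part (ii), having established (i), I only need to verify $a_1)$ and $a_3)$, since $a_2)$ will then follow automatically. Property $a_1)$ is immediate from Proposition \ref{propa}: if $w_1 \sqsubseteq w_2$ and $\Sigma_f(w_1) \ge 0$, then $\Sigma_f(w_2) \ge \Sigma_f(w_1) \ge 0$, whence $A_f(w_2) = P$. For $a_3)$ I would just compute: the string $0\cdots 0 \mid 0\cdots 0$ has every symbol equal to $0^\circ$, so by the convention $f(0^\circ) = 0$ we obtain $\Sigma_f = 0$ and thus $A_f = P$; the string $0\cdots 0 \mid 0\cdots 0\,1$ carries a single $\overline{1}$ in its last position, giving $\Sigma_f = f(\overline{1}) < 0$ by (\ref{equ1}), hence $A_f = N$.

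For part (iii), by (ii) the axioms $a_1), a_2), a_3)$ already hold for $A_f$, so it suffices to check $a_4)$ and $a_5)$. Property $a_4)$ is merely a reformulation of Proposition \ref{propb}: from $A_f(w) = N$ we read $\Sigma_f(w) < 0$, whence $\Sigma_f(w^c) > 0$ by that proposition, and so $A_f(w^c) = P$. Property $a_5)$ is the weight condition (\ref{eq2}) itself, since the string $r(r-1)\cdots 21 \mid 12\cdots(n-r)$ has $\Sigma_f$-value equal to $f(\tilde 1) + \cdots + f(\tilde r) + f(\overline 1) + \cdots + f(\overline{n-r}) \ge 0$.

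No step here presents a genuine obstacle: all the substantive work has been carried out in Propositions \ref{propa} and \ref{propb}, and the present proposition simply repackages the monotonicity of $\Sigma_f$ under $\sqsubseteq$ and the complementarity identity $\Sigma_f(w) + \Sigma_f(w^c) = \Sigma_f(\hat 1)$ as structural axioms on the induced boolean map $A_f$. The only minor care needed is verifying that the two distinguished strings featured in $a_3)$ and $a_5)$ genuinely lie in $S(n,r)$, which is immediate from the construction rules (\ref{prop3})--(\ref{prop4}) whenever $1 \le r$ and $1 \le n-r$.
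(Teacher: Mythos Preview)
Your proposal is correct and follows essentially the same approach as the paper's own proof: part (i) is handled by the contrapositive/order-preserving observation, part (ii) by invoking Proposition~\ref{propa} together with a direct check of $a_3)$, and part (iii) by invoking Proposition~\ref{propb} together with the weight condition~(\ref{eq2}) for $a_5)$. Your write-up is in fact slightly more explicit than the paper's (which dismisses (i) as ``straightforward, by contradiction'' and $a_3)$, $a_5)$ as holding ``by definition''), but the underlying argument is identical.
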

\begin{proof}

$i)$ The assertion is straightforward, thanks to an argument by contradiction.

$ii)$ Let $f$ be a $(n,r)$-function. Suppose that $w_1,w_2 \in S(n,r)$ and that $w_1 \sqsubseteq w_2.$ By Proposition \ref{propa}, it follows that $\Sigma_f(w_1) \le \Sigma_f(w_2).$ Suppose that $A_f(w_1)>A_f(w_2).$ This would imply that $A_f(w_1)=P$ and $A_f(w_2)=N,$ i.e. (by definition of $A_f$) $\Sigma_f(w_1) \ge 0$ and $\Sigma_f(w_2) <0$ and this is a contradiction. Hence $A_f$ is order-preserving. The property $a_3)$ holds by definition of $\Sigma_f$ and $A_f.$ \\
$iii)$ Let $f$ be a $(n,r)$-weight function. Let $w \in S(n,r)$ such that $A_f(w)=N.$ By definition of $A_f,$ we have that $\Sigma_f (w) <0,$ and hence, by Proposition \ref{propb}, $\Sigma_f(w^c) >0,$ i.e. $A_f(w^c)=P.$ Hence $A_f$ satisfies $a_4)$. The property $a_5)$ is obviuosly satisfied, by definition of $\Sigma_f$ and $A_f$ since $f$ is a $(n,r)$-weight function.
\end{proof}
\begin{definition}
A map $A \in B(n,r)$ is said to be {\it numerically represented} if there exists a $(n,r)$-function $f \in F(n,r)$ such that $A=A_f.$
\end{definition}
\begin{definition}
A map $A\in WB(n,r)$ is said to be {\it numerically represented} if there exists a $(n,r)$-weight function $f \in WF(n,r)$ such that $A=A_f.$
\end{definition}
Set
$$RB(n,r)=\{ A \in B(n,r): A \textrm{ is numerically represented} \},$$
$$RWB(n,r)=\{ A \in WB(n,r): A \textrm{ is numerically represented} \} .$$
\begin{proposition}
\begin{itemize}
\item[i)] $RB(n,r)$ is identified with a quotient of $F(n,r).$ \\
\item[ii)] $RWB(n,r)$ is identified with a quotient of $WF(n,r).$
\end{itemize}
\end{proposition}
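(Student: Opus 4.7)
The plan is to present both statements as instances of the standard ``image is a quotient by the kernel relation'' fact applied to the surjection $f \mapsto A_f$.

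First I would handle part (i). Proposition \ref{propd}(ii) gives a well-defined map $\Phi : F(n,r) \to B(n,r)$, $\Phi(f) = A_f$. By the very definition of $RB(n,r)$, the image of $\Phi$ is exactly $RB(n,r)$, so $\Phi$ restricts to a surjection $\Phi : F(n,r) \twoheadrightarrow RB(n,r)$. I would then introduce the equivalence relation $\sim$ on $F(n,r)$ defined by
\[
f \sim g \iff A_f = A_g,
\]
which is evidently reflexive, symmetric, and transitive. The surjection $\Phi$ factors uniquely through the quotient, yielding a map $\overline{\Phi} : F(n,r)/\!\sim \,\to RB(n,r)$, $[f] \mapsto A_f$, which is well-defined by construction, surjective because $\Phi$ is, and injective because $\overline{\Phi}([f]) = \overline{\Phi}([g])$ means $A_f = A_g$, i.e.\ $f \sim g$. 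Hence $\overline{\Phi}$ is a bijection, and this is the identification of $RB(n,r)$ with the quotient $F(n,r)/\!\sim$.

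For part (ii) I would repeat the argument verbatim with $WF(n,r)$, $WB(n,r)$, $RWB(n,r)$ in place of $F(n,r)$, $B(n,r)$, $RB(n,r)$, invoking Proposition \ref{propd}(iii) in place of \ref{propd}(ii) to ensure the map $f \mapsto A_f$ genuinely takes $(n,r)$-weight functions into $WB(n,r)$. The same equivalence relation, now restricted to $WF(n,r)$, produces the required bijection $WF(n,r)/\!\sim \,\to RWB(n,r)$.

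Since the argument is essentially a direct application of the first isomorphism principle for set-maps, I do not expect any real obstacle; the only point that requires a sentence of care is noting that $\Phi$ lands inside $RB(n,r)$ (respectively $RWB(n,r)$) and surjects onto it by the very \emph{definition} of ``numerically represented''. Everything else is formal. I would conclude by remarking that the identifications are natural in the sense that under them, the class $[f]$ corresponds to the boolean map $A_f$, so no additional structure need be transported.
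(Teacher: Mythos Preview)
Your proposal is correct and follows essentially the same approach as the paper: define $\varphi(f)=A_f$ using Proposition~\ref{propd}, identify its image as $RB(n,r)$ (resp.\ $RWB(n,r)$) by definition of ``numerically represented'', and factor through the quotient by the kernel relation $f\sim g\iff A_f=A_g$ to obtain the desired bijection. The only cosmetic difference is that the paper phrases $\sim$ as ``$\Sigma_f(w)\ge 0\Leftrightarrow\Sigma_g(w)\ge 0$ for all $w$'' before observing this is the same as $A_f=A_g$.
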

\begin{proof}
$i)$ We define on $F(n,r)$ the following binary relation: if $f,g \in F(n,r),$ we set $f \sim g$ if for each $w \in S(n,r)$, we have that $\Sigma_f (w) \ge 0 \Leftrightarrow \Sigma_g (w) \ge 0$.

Then $\sim$ is an equivalence relation on $F(n,r).$ By Proposition \ref{propd}-$ii),$ if $f \in F(n,r)$ it follows that $A_f \in B(n,r).$ Therefore it is defined a map $\varphi: F(n,r) \to B(n,r)$ such that $\varphi(f)=A_f.$ \\
Then, if $f,g \in F(n,r),$ it follows that:
$$(f \sim g)\Longleftrightarrow (\textrm{for each}\,\, w \in S(n,r), \Sigma_f(w) \,\,\, \textrm{and}\,\,\, \Sigma_g(w) \,\,\, \textrm{have the same sign})$$
$$\Longleftrightarrow (A_f(w)=A_g(w) \,\,\, \textrm{for each} \,\,\, w \in S(n,r)) \Longleftrightarrow \varphi(f)=\varphi(g).$$
By the Universal Property of the quotient, there exists a unique injective map $\tilde{\varphi} : F(n,r)
\to B(n,r)$ such that the following diagram commutes:
\begin{equation*} 
\begin{CD}
\xymatrix{
 F(n,r) \ar[rrrr]^{\varphi} \ar[rrd]_{\nu} & & & & B(n,r) \\
& & F(n,r)/\sim \ar[rru]_{\tilde{\varphi}} & &  }
\end{CD}
\end{equation*}
where $\nu$ is the projection on the quotient. Since the image of $\varphi$ is exactly $RB(n,r)$ and since it coincides with the image of $\tilde{\varphi},$ it follows that $\tilde{\varphi}$ is a bijective map between $F(n,r)/\sim$ and $RB(n,r).$\\ Analogously we prove $ii),$ using Proposition \ref{propd}-$iii).$
\end{proof}
If $A \in WB(n,r),$ we will set
$$S_A ^+ (n,r) = \{ w \in S(n,r) : A(w)=P \}, $$
and if $d\ge 1$ is such that $d \le n,$ we will set:
$$S_A ^+ (n,d,r) = \{ w \in S(n,d,r) : A(w)=P \}. $$
Furthermore, we set:
$$\tilde{\gamma} (n,r) = \min \{ |S^+ _A (n,r)| : A \in WB(n,r) \}, $$
$$\tilde{\gamma} (n,d,r) = \min \{ |S^+ _A (n,d,r)| : A \in WB(n,r) \},$$
$$\overline{\gamma} (n,r) = \min \{ |S^+ _A (n,r)| : A \in RWB(n,r) \}, $$
$$\overline{\gamma} (n,d,r) = \min \{ |S^+ _A (n,d,r)| : A \in RWB(n,r) \}.$$
\begin{proposition} \label{propebis}
\begin{itemize}
\item[i)] $\gamma (n,r)=\overline{\gamma}(n,r) \ge \tilde{\gamma} (n,r).$ \\
\item[ii)] $\gamma (n,d,r)=\overline{\gamma}(n,d,r) \ge \tilde{\gamma} (n,d,r).$
\end{itemize}
\end{proposition}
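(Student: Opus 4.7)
\textbf{Proof plan for Proposition \ref{propebis}.} The plan is to unravel the definitions and match up the extremal sets. For part (i), I would first observe the tautology $S_f^+(n,r) = S_{A_f}^+(n,r)$: by the very definition of $A_f$, we have $A_f(w) = P$ iff $\Sigma_f(w) \ge 0$, so the underlying sets are literally equal and in particular have the same cardinality. Next, by Proposition \ref{propd}(iii), if $f \in WF(n,r)$ then $A_f \in WB(n,r)$, and since $A_f$ is by definition numerically represented, actually $A_f \in RWB(n,r)$. Conversely, any $A \in RWB(n,r)$ is by definition of the form $A_f$ for some $f \in WF(n,r)$. Hence the two sets of cardinalities
\begin{equation*}
\{|S_f^+(n,r)| : f \in WF(n,r)\} \quad \text{and} \quad \{|S_A^+(n,r)| : A \in RWB(n,r)\}
\end{equation*}
coincide, and therefore their minima agree, giving $\gamma(n,r) = \overline{\gamma}(n,r)$.

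For the inequality $\overline{\gamma}(n,r) \ge \tilde{\gamma}(n,r)$, I would simply note the containment $RWB(n,r) \subseteq WB(n,r)$ (which holds by definition of $RWB(n,r)$ as the subclass of numerically represented elements of $WB(n,r)$). Since $\tilde{\gamma}$ is a minimum over a larger family than $\overline{\gamma}$, the minimum can only go down (or stay the same), which yields the desired inequality. Chaining the two gives the statement of (i).

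Part (ii) is entirely parallel: the key observation is still $S_f^+(n,d,r) = S_{A_f}^+(n,d,r)$, which holds because this set is just the restriction of $S_f^+(n,r) = S_{A_f}^+(n,r)$ to the sublattice $S(n,d,r)$. The same surjection $f \mapsto A_f$ from $WF(n,r)$ onto $RWB(n,r)$ is used, and the same inclusion $RWB(n,r) \subseteq WB(n,r)$ gives $\tilde{\gamma}(n,d,r) \le \overline{\gamma}(n,d,r)$.

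There is really no hard step here: the proposition is essentially a bookkeeping statement that records the fact that Proposition \ref{propd} lets one translate the extremal problem for weight functions into an extremal problem for a subclass of boolean maps. The only thing to be careful about is not to confuse $\tilde{\gamma}$ (over all $WB$'s) with $\overline{\gamma}$ (over only the representable ones); the distinction is essential because a general $(n,r)$-WBM need not come from any real-valued weight function, and indeed the open problems announced in Section \ref{Sec7} concern precisely whether $\tilde{\gamma}=\overline{\gamma}$, i.e.\ whether every $(n,r)$-WBM admits a numerical representation.
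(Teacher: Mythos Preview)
Your proof is correct and follows essentially the same approach as the paper: both arguments rest on the identity $S_f^+(n,r)=S_{A_f}^+(n,r)$, the surjection $f\mapsto A_f$ from $WF(n,r)$ onto $RWB(n,r)$ (via Proposition \ref{propd}(iii)), and the inclusion $RWB(n,r)\subseteq WB(n,r)$. The only cosmetic difference is that you show the two sets of cardinalities coincide directly, whereas the paper establishes $\gamma(n,r)\ge\overline{\gamma}(n,r)$ and $\overline{\gamma}(n,r)\ge\gamma(n,r)$ separately by picking minimizers on each side.
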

\begin{proof}
i) The inequality $\overline{\gamma}(n,r) \ge \tilde{\gamma}(n,r)$ is obviuos because $RWB(n,r)$ is a subset of $WB(n,r).$
We prove that $\gamma(n,r)=\overline{\gamma} (n,r).$ Let $f$ be a $(n,r)$-weight function for which it holds $\gamma (n,r) =|S^+ _{f} (n,r)|.$ Then $A_f \in RWB(n,r)$ and (by definition of $A_f$) we have that: $S_{A_f} ^+ (n,r)  = \{ w \in S(n,r) : A_f (w)=P \} = \{ w \in S(n,r) : \Sigma_f (w) \ge 0 \}  =  S^+ _f (n,r).$

Hence
$$\gamma (n,r) = |S^+ _f (n,r)|=|S^+_{A_f}| \ge \overline{\gamma} (n,r),$$
because $A_f \in RWB(n,r).$ \\
On the contrary, let $A \in RWB(n,r)$ such that $\overline{\gamma} (n,r)=|S^+_A (n,r)|.$\\
Since $A$ is numerically represented, there will exist a $(n,r)$-weight function $f \in WF(n,r)$ such that $A=A_f.$ Then we have that
$$
S_{A}^+ (n,r)=S_{A_f}^+ (n,r)=S_{f}^+ (n,r),
$$
and hence
$$
\overline{\gamma} (n,r)=|S_{A} ^+ (n,r)|=|S_{f}^+ (n,r)| \ge \gamma(n,r).
$$
This proves that $\gamma(n,r)=\overline{\gamma}(n,r).$ \\
The proof of $ii)$ is similar to $i).$
\end{proof}
It is natural now to assert the following two problems:

{\bf First Open Problem : } $B(n,r)=RB(n,r)$ ? \\
{\bf Second Open Problem : } $WB(n,r)=RWB(n,r)$ ? \\

If $RWB(n,r)$ coincides with $WB(n,r)$ (i.e. if any $(n,r)$-weighted boolean  map is numerically represented) then $\gamma(n,r)=\tilde{\gamma}(n,r)$ and $\gamma(n,d,r)=\tilde{\gamma} (n,d,r),$ by Proposition \ref{propebis}. If the answer to the second open problem is affirmative, this would imply that each time we give the boolean formal values N or P to each string of $S (n,r)$ in such a way that the rules $a_1)-a_5)$ are respected, then there exists a numerical attribution to the singletons which permits the reconstruction of the configuration of N's and P's in a unique way. In other words, if the assertion of the Second Open Problem holds we have an effective representation theorem.

\end{document}